\documentclass[12pt,reqno]{amsart}

\usepackage[margin=1in]{geometry}

\usepackage{amsmath}%
\usepackage{amsfonts}%
\usepackage{amssymb}%
\usepackage{float}
\usepackage{amsthm}
\usepackage{graphicx}
\usepackage{float}
\usepackage{color}
\usepackage{hyperref}
\usepackage[normalem]{ulem}
\usepackage{pdfsync}
\usepackage{float}

\numberwithin{equation}{section}

\newtheorem{theorem}{Theorem}
\newtheorem{lemma}{Lemma}
\newtheorem{definition}{Definition}
\newtheorem{observation}{Observation}
\newtheorem{algorithm}{Algorithm}

\newcommand{\ve}{\varepsilon}
\renewcommand{\epsilon}{\varepsilon}

\newtheorem{remark}{Remark}

\makeatletter
\renewcommand{\maketag@@@}[1]{\hbox{\m@th\normalsize\normalfont#1}}%
\makeatother

\makeatletter
\newcommand\footnoteref[1]{\protected@xdef\@thefnmark{\ref{#1}}\@footnotemark}
\makeatother

\title{Uniqueness and traveling waves in a cell motility model}
\author{Matthew S. Mizuhara}
\thanks{MSM: Department of Mathematics, The Pennsylvania State University, University Park, PA 16802, USA. The work of MSM was supported by the Department of Defense (DoD) through the National Defense Science \& Engineering Graduate Fellowship (NDSEG) Program. He also received partial support from NSF grants DMS-1106666 and DMS-1405769.} 
\email{msm344@psu.edu}
\author{Peng Zhang}
\thanks{PZ: Institute of Natural Sciences and Department of Mathematics, Shanghai Jiao Tong University; Key Laboratory of Scientific and Engineering Computing (Shanghai Jiao Tong University), Ministry of Education, 800 Dongchuan Road, 200240, Shanghai, China. The work of PZ was partially supported by the {National Natural Science Foundation of China grant 11471214 and the One Thousand Plan of China for young scientists.}}
\email{zhangpengmath@sjtu.edu.cn}

\begin{document}

\subjclass[2010]{35Q92, 35K55, 65-04}

\maketitle

\begin{abstract}
	    We study a non-linear and non-local evolution equation for curves obtained as the sharp interface limit of 
	    a phase-field model for crawling motion of eukaryotic cells on a substrate.
	    We establish uniqueness of solutions to the sharp interface limit equation in the so-called subcritical parameter regime.  The proof relies on a Gr\"onwall estimate for a specially chosen weighted $L^2$ norm. 
	    
	     Next, as persistent motion of crawling cells is of central interest to biologists we study the existence of traveling wave solutions.  We prove that traveling wave solutions exist in the supercritical parameter regime provided the non-linear term of the sharp interface limit equation possesses certain asymmetry (related, e.g., to myosin contractility).
	    
	    Finally, we numerically investigate traveling wave solutions and simulate their dynamics. Due to non-uniqueness of solutions of the sharp interface limit equation we simulate a related, singularly perturbed PDE system which is uniquely solvable. Our simulations predict instability of traveling wave solutions and capture both bipedal wandering cell motion as well as rotating cell motion; these behaviors qualitatively agree with recent experimental and theoretical findings.
\end{abstract}

\section{Introduction}

%

Motility of eukaryotic cells has been long studied by both biologists and mathematicians due to its importance in various biological processes such as wound healing \cite{Moh03} and the immune response \cite{KruBarGer16}.   
The motion of these crawling cells is driven by an evolving cytoskeleton which creates protrusion forces against the surrounding cell membrane. Various modeling efforts have studied a large variety of phenomena including persistent motility (see, e.g., \cite{BarLeeAllTheMog15, Mar15, RecPutTru15, Sha10, Tjh15, Zie12}), turning cells \cite{CamZhaLiLevRap16}, spontaneous division of cells \cite{Gio14}, symmetry breaking and 3D models of cell motility \cite{BarLeeAllTheMog15, Haw11,Tjh15}, hydrodynamic interactions with the cell cytoplasm \cite{Mar15}, interaction with non-homogeneous substrates \cite{Zie13}, and collective cell migration \cite{CamZimLevRap16, LobZieAra15}. Specifically, recent phase-field models have had great success in capturing various modes of motility as well as their dependence on physical parameters.  The main feature of phase-field models is a diffuse interface which approximates the location of the cell membrane separating two phases (e.g., interior and exterior of cell). As the width of the diffuse interface tends to zero, one recovers the so-called sharp interface limit equation: a geometric evolution equation for planar curves which is amenable to analysis and subsequent numerics.


%


In this work we analytically and numerically study the following evolution equation for a family of curves $\Gamma(t)$:
\begin{equation}\label{interface}
V(s,t) = \kappa(s,t) + \Phi_\beta(V(s,t)) - \lambda(t).
\end{equation}
Here, $s\in I$ is any parametrization of $\Gamma(t)$, $V(s,t)$ denotes the inward pointing normal velocity of the curve $\Gamma(t)$ at location $s$, $\kappa(s,t)$ is the signed curvature of the curve $\Gamma(t)$ at location $s$, $\Phi_\beta(\cdot)$ is a known non-linear function depending on a physical parameter $\beta\geq 0$, and $\lambda(t)$ is chosen so that the area enclosed by $\Gamma(t)$ is constant for all $t$:
\begin{equation}
\lambda(t) = \frac{1}{|\Gamma(t)|}\int_\Gamma \Phi_\beta(V(s,t)) + \kappa(s,t) ds,
\end{equation}
where $|\Gamma(t)|$ denotes the length of the curve $\Gamma(t)$.

Equation \eqref{interface} is volume preserving curvature motion with additional non-linearity due to $\Phi_\beta$. The problem of mean curvature type motion was extensively studied by mathematicians from both PDE and geometry communities for several decades, see, e.g., \cite{Bra78,GagHam86,Gra87}. Furthermore viscosity solution techniques have been efficiently applied in the PDE analysis of such problems \cite{CheGigGot91, EvaSpr93}. Analysis of mean curvature motions with volume preservation constraints was continued in, e.g., \cite{Che93,Ell97, Esc98,Gag86}. Existence and uniqueness results of modified mean curvature motions have additionally been recently studied \cite{Bon00,Che93, Ell97}. Analogous problems have resurfaced in the novel context of biological cell motility problems, after a phase-field model was introduced in \cite{Zie12} and its sharp interface limit \eqref{interface} derived in \cite{BerPotRyb16}. In a certain so-called subcritical regime, existence of solutions and traveling waves of \eqref{interface} were studied in \cite{MizBerRybZha15}.  In this work we prove uniqueness of solutions in the subcritical regime. We additionally study traveling waves in the supercritical regime both analytically and numerically.

Here we summarize the results of each section. First, we provide the necessary biological and mathematical background related to the model of crawling cell motility.  In Section 2 we establish uniqueness of solutions to \eqref{interface} in the subcritical regime. The main difficulties are the non-linearity and non-locality in the sharp interface limit equation; we will establish a Gr\"onwall inequality on a specially chosen weighted $L^2$ norm which allows for necessary estimates on both the non-linear and non-local terms.  

In Section 3 we study traveling wave solutions in the supercritical regime. We prove that if $\Phi_\beta$ is symmetric (i.e. $\Phi_\beta(V)=\Phi_\beta(-V)$) then the only traveling wave solution is a stationary circle. This requires new estimates due to lack of monotonicity of the operator (c.f., subcritical regime \cite{MizBerRybZha15}).  We also prove a sufficient condition on $\Phi_\beta$ for the existence of non-trivial traveling wave solutions. 

In Section 4 we present numerical results. We first introduce an algorithm which searches for and constructs the profiles of traveling wave solutions. We next study the stability of traveling wave solutions via dynamic simulations. Due to non-uniqueness of solutions to \eqref{interface} we require introduction of a regularization term. Our simulations capture both wandering cells \cite{Zie13} as well as a recent experimentally observed phenomenon of rotating cells \cite{Lou15}. Moreover we observe the switch between these two regimes depending on parameter values.

\subsection{Biological background}

 
 Fish keratocyte cells are commonly used in experiments due to their ability to undergo persistent motion with essentially constant shape and direction for many cell lengths after an initial polarization. Moreover they may spontaneously polarize from non-motile, circular states and begin traveling persistently even without external stimuli \cite{BarLeeAllTheMog15}.  Crawling keratocytes on flat substrates are several microns in length and width, but have heights of only $\sim$.1-.2 $\mu$m, making them amenable to 2D modeling and simulation \cite{MogKer09}.  

  The biological mechanisms involved in crawling cell motion are complicated and provide an ongoing source of research.  An overview for such mechanisms can be found in, e.g., \cite{Mog09}. We recall the following key components: consider the cell membrane to be an inextensible bag and assume that a directional preference has been established. A crawling cell has the ability to maintain self-propagating motion via internal forces generated by {\em actin polymerization}. Actin monomers are polarized so as to bind together forming filaments which create a dense network at the leading edge of the cell, known as the lamellipod. The cell's front protrudes via growth of actin filaments at the leading edge and degradation of the filaments towards the interior of the cell, a process known as {\em actin treadmilling}. Adhesions of the cell to the substrate creates traction forces which generate cell motion.
  
 {\em Myosin} is a molecular motor which interacts with actin filaments creating contractile forces. Actomyosin interaction leads to contraction of the rear part of the cell in steadily moving cells. Recent models also suggest that myosin-driven contraction of the cytoskeleton is sufficient to drive persistent motion of the cell \cite{RecPutTru15} or spontaneously switch a cell from a symmetric, non-motile state to asymmetric, motile states \cite{BarLeeAllTheMog15}.
  
  There are various modes of motility which are observed in both experiments and simulations. First, keratocytes may move with essentially the same shape and direction for many cell lengths. Such persistent motion is described mathematically by traveling wave solutions.  In \cite{BarAllJulThe10} so-called ``bipedal motion'' was observed wherein the cell exhibited periodic lateral out-of-phase oscillations. These oscillations were explained by the effect of  elastic coupling between the rear and front of the cell membrane. This motion was additionally captured by phase-field simulations in \cite{LobZieAra14}.
  
  A final mode, so-called ``rotating cells,''  was recently observed  in \cite{Lou15} where cells remain essentially stationary but experience laterally periodic protrusions of the membrane. In this case, these transient protrusions are generated by actin polymerization fronts which experience short protrusion lifetimes. This decrease is due to expression of a particular kinase (MLCK) leading to an increase of myosin activity in the cell's lamellipod ultimately limiting actin protrusions globally.

\subsection{Phase-field model of crawling cell motion and sharp interface limit}\label{pf_intro}
Equation \eqref{interface} is the sharp interface limit of the 2D phase-field model of crawling cell motility first studied in \cite{BerPotRyb16} and subsequently in \cite{MizBerRybZha15}.

The phase-field model consists of a system of two PDEs: one for the phase field function and another for the orientation vector due to polymerization of actin filaments inside the cell. 
Let $\Omega\subset \mathbb{R}^2$ be a smooth bounded domain, then
\begin{equation}\label{eq1}
\frac{\partial \rho_\ve}{\partial t}=\Delta \rho_\ve
-\frac{1}{\ve^2}W^{\prime}(\rho_\ve)
- P_\ve\cdot \nabla \rho_\ve +\lambda_\ve(t)\;\;
  \text{ in }\Omega,
\end{equation}
\begin{equation}\label{eq2}
\frac{\partial P_\ve}{\partial t}=\ve\Delta P_\ve -\frac{1}{\ve}P_\ve
 -\beta \nabla \rho_\ve\;\;
 \text{ in } \Omega,
\end{equation}
where
\begin{equation*} \label{lagrange}
\lambda_\ve(t)=\frac{1}{|\Omega|}\int_\Omega\left(\frac{1}{\ve^2}W^\prime(\rho_\ve)
+ P_\ve\cdot \nabla \rho_\ve \right)\, dx
\end{equation*}
is a {\em Lagrange multiplier} term responsible for total volume preservation of $\rho_\ve$, and $W$ is a double well potential (e.g., $W(z)=\frac{1}{4}z^2(1-z)^2$). The shape of $W$ is determined, e.g., by effects of myosin motors (see Remark \ref{rem:myosin}). The quantity $\beta\geq 0$ is a physical parameter which characterizes actin polymerization strength and rate as well as adhesion strength. System \eqref{eq1}-\eqref{eq2} is a simplified version of a more general phase-field model introduced in \cite{Zie12}. These simplifications retain key features of qualitative behavior and is suitable for asymptotic analysis.


	Details of the model terms and sharp interface limit analysis can be found in \cite{BerPotRyb16}. Here we briefly describe the terms of the phase-field model.
  The phase-field parameter $\rho_\ve\colon \Omega\rightarrow \mathbb{R}$, roughly speaking, takes values 1 and 0 inside and outside, respectively, a subdomain $D_\ve(t)\subset \Omega$ occupied by the moving cell with a thin interface layer of width $O(\ve)$.  The first two terms on the right hand side of \eqref{eq1} are recognized from the classic Allen-Cahn equation. Together they enforce preservation of a thin interface layer and drive the interface layer to move by curvature motion, e.g., modeling surface tension.  The drift term $P_\ve \cdot \nabla \rho_\ve$ models advection due to actin polymerization. The vector valued function $P_\ve \colon \Omega\rightarrow \mathbb{R}^2$ models the polar orientation of actin filaments inside the cell. The magnitude of $P_\ve$ is a measure of the degree of ordering of the filaments. In addition to spatial diffusion we include degradation of actin filaments via the exponential decay term $-\ve^{-1} P_\ve$ and polymerization of actin filaments at the boundary via the source term $-\beta \nabla \rho_\ve$. 
  
  In the limit $\ve \to 0$, the $\rho_\ve$ converge to time-dependent characteristic functions: $D(t) = \lim_{\ve \to 0} D_\ve (t)$ with sharp boundary $\Gamma(t):=\partial D(t)$. 
 Indeed, given a closed, non self-intersecting curve $\Gamma(0)\subset \mathbb{R}^2$, consider the initial profile
  \begin{equation*}\label{phaseIC}
  \rho_\ve(x,0) = \theta_0\left(\frac{\operatorname{dist}(x,\Gamma(0))}{\ve}\right)
  \end{equation*}
  where $\operatorname{dist}(x,\Gamma(0))$ is the signed distance from the point $x$ to the curve $\Gamma(0)$ and $\theta_0=\theta_0(z)$ solves:
    \begin{equation}\label{standingwave}
    \theta_0''(z) = W'(\theta_0(z)), \;\;\; \theta_0(-\infty) = 0,\; \theta_0(\infty) = 1.
    \end{equation} 
    For example, if $W$ is the classical Allen-Cahn potential: $W(z) = \frac{1}{4} z^2(1-z)^2$, then $\theta_0$ is the standing wave solution:
  \begin{equation*}\label{steadysolution}
  \theta_0(z) = \frac{1}{2}\left(\tanh\left(\frac{z}{2\sqrt{2}}\right)+1\right).
  \end{equation*}
   It was shown in  \cite{BerPotRyb16} that as $\ve \to 0$ (i.e., in the sharp interface limit),  $\rho_\ve(x,t)$ has the asymptotic form
  \begin{equation*}\label{asymptoticform}
  \rho_\ve(x,t) = \theta_0\left(\frac{\operatorname{dist}(x,\Gamma(t))}{\ve}\right)+ O(\ve)
  \end{equation*}
  where the family of curves $\Gamma(t)$ evolves by \eqref{interface} with $\Phi_\beta$ defined by
   \begin{equation}\label{phi}
    \Phi_\beta(V) := \int_\mathbb{R} \psi(z; V)(\theta_0'(z))^2 dz
    \end{equation}
    where $\psi(z)=\psi(z;V)$ is the solution of
    \begin{equation}\label{phi1}
    \psi''(z)+V\psi'(z)-\psi(z)- \beta\theta_0' =0, \;\; \psi(\pm \infty)=0.
    \end{equation}
    Note that $\Phi_\beta$ depends on the function $\theta_0$ and thus on the potential $W$. In particular, if $W$ is a symmetric, equal double-well potential, e.g., the Allen-Cahn potential $W(z) = \frac{1}{4} z^2(1-z)^2$, then $\Phi_\beta$ is even: $\Phi_\beta(V)= \Phi_\beta(-V)$ for all $V\in \mathbb{R}$. If $W$ possesses some asymmetry then $\Phi_\beta$ may be asymmetric.
    \begin{remark}\label{rem:myosin}
    Asymmetric potential wells arise in the original phase-field model \cite{Zie12} due to interaction of myosin with actin. Indeed, in \cite{Zie12} the potential is of the form $W'(z) = (1-z)(\delta-z)z$, where
    \begin{equation}
    \delta := \frac{1}{2} + \mu\left(\int \rho dx - V_0\right) - \sigma|P_\ve|^2.
    \end{equation}
    The integral term represents volume preservation with stiffness parameter $\mu$, represented as the Lagrange multiplier in \eqref{eq1}-\eqref{eq2}. The term $\sigma |P_\ve|^2$ models contractile stress due to myosin motors, see \cite{Zie12} for details. It follows that if $\delta \neq \frac{1}{2}$, then the double-well potential is asymmetric.
    \end{remark}

  The analysis and the behavior of solutions of \eqref{interface} crucially depends on the parameter $\beta$ in \eqref{phi1}. Define
  \begin{equation}
  	\beta_{crit} := \sup\{\beta \mid \|\Phi_\beta'\|_{L^\infty(\mathbb{R})}<1\}.
  \end{equation}

  The subcritical regime consists of all $\beta<\beta_{crit}$. In this regime, $V-\Phi_\beta(V)$ is monotone and so \eqref{interface} is uniquely solvable for $V$.
    Note the particular case $\beta=0$ in \eqref{eq1}-\eqref{eq2} leads to $P_\ve \equiv 0$ reducing the PDE system to the volume preserving Allen-Cahn equation. Properties of this equation were studied in \cite{Che10,Gol94} and it was proved that the sharp interface limit as $\ve\to 0$ recovers volume preserving mean curvature motion: $V=\kappa-\frac{1}{|\Gamma|}\int_{\Gamma} \kappa ds$. 
     Short time existence of solution curves for any subcritical $\beta<\beta_{crit}$ was proved in \cite{MizBerRybZha15} for a general class of initial data.

    In the supercritical regime, $\beta>\beta_{crit}$, complicated phenomena such as non-uniqueness and hysteresis arise, even in 1D (see \cite{BerPotRyb16}). The non-uniqueness of solutions of \eqref{interface} in the supercritical regime is the result of ``losing information'' in the sharp interface limit. To resolve this we consider an intermediate system (between the full phase-field model and the sharp interface limit) which still defines a geometric evolution but remains uniquely solvable. In 1D the system is:
        \begin{align}
        c_0 V_\ve &= \int (\theta_0'(z))^2 f_\ve(z,t)dz - F(t) \label{stability_eqn1} \\
        \ve \partial_t f_\ve  &= \partial_{zz}^2 f_\ve + V_\ve \partial_z f_\ve -f_ve- \beta \theta_0', \;\; f_\ve(\pm \infty) = 0 \label{stability_eqn2}
        \end{align}
        where $F(t)$ is a given function replacing the effects of curvature and the Lagrange multiplier terms.
        
         This intermediate system will be beneficial for numerical simulations and linear stability analysis. In particular it is proved in \cite{BerPotRyb16} that a necessary condition for stability of a steady state $(V^*,f^*)$ solving \eqref{stability_eqn1}-\eqref{stability_eqn2} is that $\Phi_\beta'(V^*) < c_0$. It is expected (and verified numerically in 1D) that this is also a sufficient condition. We thus call $V^*$ such that $\Phi_\beta'(V^*)<c_0$ a {\em stable velocity} and an {\em unstable velocity} otherwise.
    
    In 2D the analogous intermediate system is
  
  \begin{align}
  V_\epsilon(s,t) &= \kappa_\epsilon(s,t) + \int_{\mathbb{R}} A_\epsilon(s,z,t) (\theta_0'(z))^2 dz - \lambda_\epsilon(t) \label{numeric_veqn1} \\
  \epsilon \partial_t A_\epsilon(s,z,t) &= \partial_{zz}A_\epsilon + V_\epsilon\partial_z A_\epsilon  -A_\epsilon - \beta\theta_0'(z),\;\; A_\epsilon (\pm \infty) = 0. \label{numeric_aeqn1}
  \end{align}
  
  It is clear that the formal limit $\ve \to 0$ yields \eqref{interface}. Passing to the limit rigorously has been done in 1D in \cite{BerPotRyb16}.

\section{Uniqueness}

In \cite{MizBerRybZha15} it was proved that if $\|\Phi_\beta'\|_{L^\infty(\mathbb{R})}<1$ (subcritical regime) then given any initial curve $\Gamma_0\in W^{1,\infty}$, there exists a time $T>0$ and a family of continuous in time curves $\Gamma(t)\in H^2$ evolving by \eqref{interface} for all $t\leq T$. Under additional regularity assumptions we prove uniqueness of solutions:




\begin{theorem}\label{thm_unique}
Let $\Phi_\beta\in W^{3,\infty}(\mathbb{R})$ satisfy $\|\Phi_\beta'\|_{L^\infty(\mathbb{R})}<1$ and let $\Gamma_0\in W^{3,\infty}$. There exists a time $T>0$ such that any solution $\Gamma(t)\in H^2$ satisfying \eqref{interface} on $[0,T]$ with initial condition $\Gamma(0)=\Gamma_0$ is unique.
\end{theorem}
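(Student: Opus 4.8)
The plan is to show that any two solutions $\Gamma_1(t),\Gamma_2(t)$ with $\Gamma_1(0)=\Gamma_2(0)=\Gamma_0$ must coincide, by writing both as normal graphs over the fixed initial curve and running a Gr\"onwall estimate on a weighted $L^2$ norm of the difference of their height functions. Since $\Gamma_0\in W^{3,\infty}$ and the solutions are continuous in time, for a short interval $[0,T]$ both curves stay $C^1$-close to $\Gamma_0$ and can be written uniquely as $\Gamma_i(t)=\{X_0(u)+d_i(u,t)N_0(u):u\in I\}$, where $N_0$ is the unit normal of $\Gamma_0$ and $d_i(\cdot,0)=0$. This representation removes the reparametrization ambiguity inherent in the geometric flow, so that uniqueness of curves is equivalent to $w:=d_1-d_2\equiv 0$. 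Each geometric quantity $V_i,\kappa_i$, the metric $\mu_i=|\partial_u X_i|$, the length $|\Gamma_i|$, and $\lambda_i$ becomes an explicit (nonlinear, nonlocal) functional of $d_i$ and its first two $u$-derivatives.

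The first key step is to exploit subcriticality to remove the implicit nonlinearity. Since $\|\Phi_\beta'\|_{L^\infty}<1$, the map $V\mapsto V-\Phi_\beta(V)$ is a bi-Lipschitz bijection, so \eqref{interface} solves uniquely as $V_i=g(\kappa_i-\lambda_i)$ with $g:=(\mathrm{id}-\Phi_\beta)^{-1}$, and writing $\partial_t d_i=V_i/\nu_i$ with $\nu_i=N\cdot N_0>0$ turns the flow into a quasilinear parabolic equation for $d_i$. Subtracting the two equations and using the integral form of the mean value theorem, $\Phi_\beta(V_1)-\Phi_\beta(V_2)=\bar a\,(V_1-V_2)$ with $\bar a:=\int_0^1\Phi_\beta'(V_2+\tau(V_1-V_2))\,d\tau$ and $|\bar a|\le\|\Phi_\beta'\|_{L^\infty}<1$, yields $(1-\bar a)(V_1-V_2)=(\kappa_1-\kappa_2)-(\lambda_1-\lambda_2)$; here the coefficient $1-\bar a$ is bounded away from $0$, the crucial consequence of subcriticality. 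Because $\kappa_1-\kappa_2$ is a second-order operator in $w$ of divergence form, the equation for $w$ reads $\partial_t w=\tfrac{1}{\alpha}\partial_u(p\,\partial_u w)+(\text{terms of order }\le 1\text{ in }w)-\tfrac{\lambda_1-\lambda_2}{\alpha}$ with strictly positive principal coefficient $1/\alpha$, where $\alpha$ incorporates $(1-\bar a)$, $\nu_1$, and the metric factors.

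The main obstacle is the nonlocal term $\lambda_1-\lambda_2$. I would first record the structural simplification that for a simple closed curve $\int_{\Gamma_i}\kappa_i\,ds=2\pi$, so the curvature contributions to $\lambda_i$ cancel and only length and $\Phi_\beta(V)$ integrals survive: $\lambda_1-\lambda_2=2\pi(|\Gamma_1|^{-1}-|\Gamma_2|^{-1})+|\Gamma_1|^{-1}\!\int_{\Gamma_1}\!\Phi_\beta(V_1)\,ds-|\Gamma_2|^{-1}\!\int_{\Gamma_2}\!\Phi_\beta(V_2)\,ds$. The length differences and the metric mismatch are controlled by $\|\partial_u w\|_{L^2}$, while the piece $\int(\Phi_\beta(V_1)-\Phi_\beta(V_2))\mu_2\,du=\int\bar a(V_1-V_2)\mu_2\,du$ naively involves $\kappa_1-\kappa_2$, i.e. $\partial_{uu}w$, which is too strong for an $L^2$ estimate. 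I would integrate by parts to move one derivative onto the coefficient, reducing it to $\|\partial_u w\|_{L^2}$ at the cost of needing bounded $u$-derivatives of the coefficients; this is precisely where $\Phi_\beta,\Gamma_0\in W^{3,\infty}$ enter, guaranteeing (after parabolic bootstrapping of the $H^2$ solutions) that $\partial_u\bar a$, $\partial_u\mu_i$, and related quantities are bounded. Finally, since $\lambda_1-\lambda_2$ also appears on the right through $V_1-V_2$, one collects it: with $K:=|\Gamma_2|^{-1}\int \tfrac{\bar a}{1-\bar a}\mu_2\,du$ the identity becomes $(1+K)(\lambda_1-\lambda_2)=(\text{terms bounded by }\|w\|_{H^1})$, and since $\tfrac{\bar a}{1-\bar a}>-\tfrac12$ whenever $|\bar a|<1$ one has $1+K>\tfrac12>0$, so $\lambda_1-\lambda_2$ is solvable and bounded by $C\|w\|_{H^1}$.

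With these estimates in hand I would close the argument with the weighted energy $E(t):=\int_I \alpha\, w^2\,du$, whose weight $\alpha=1/(\text{principal coefficient})$ is ``specially chosen'' so that, upon differentiating and integrating by parts, the troublesome first-order cross term vanishes and the principal part contributes exactly $-\int p(\partial_u w)^2$, a clean dissipation. The lower-order terms, the contribution of $\partial_t\alpha$, and the nonlocal term are absorbed by Young's inequality: each is bounded by $\varepsilon\|\partial_u w\|_{L^2}^2+C\|w\|_{L^2}^2$, with the $\varepsilon$-part swallowed by the dissipation and the remainder dominated by $C(t)E(t)$ since $\alpha$ is bounded above and below. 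This gives $E'(t)\le C(t)E(t)$ on $[0,T]$ with $E(0)=0$, whence Gr\"onwall forces $E\equiv 0$, i.e. $w\equiv 0$ and $\Gamma_1(t)=\Gamma_2(t)$. The delicate point throughout is the nonlocal $\lambda$-term: both its apparent second-order character, resolved by the $\int\kappa\,ds=2\pi$ cancellation together with integration by parts, and its implicit self-reference, resolved by the $1+K>0$ invertibility that again rests on subcriticality.
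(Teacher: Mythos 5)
Your architecture is sound and genuinely different from the paper's at the decisive step. The paper tests the difference equation against $(1-u\kappa_0)w_t$, so the good term is $c\int_I w_t^2$, the energy is the weighted gradient norm $\bigl\|\tfrac{1-u\kappa_0}{S^{3/2}(u)}w_\sigma\bigr\|_{L^2}^2$, and the nonlocal term is tamed by the exact volume-preservation identity $\int_I(1-u\kappa_0)u_t\,d\sigma=0$, which trades $\int(1-u\kappa_0)w_t$ for $\int\kappa_0 v_t w$; a crude bound $|\lambda_u-\lambda_v|^2\le C(\|w\|^2+\|w_\sigma\|^2+\|w_t\|^2)$ then suffices because the $w_t$-part is absorbed by the good term. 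Closing the Gr\"onwall on $\|w_\sigma\|^2$ forces the paper to prove a separate Poincar\'e inequality for $w$ (via an intersection point guaranteed by area preservation) and a final step passing from $w_\sigma\equiv0$ to $w\equiv0$. You instead test against $w$, take the dissipation from the elliptic part, and insist on a bound $|\lambda_1-\lambda_2|\le C\|w\|_{H^1}$ with no $w_t$, obtained by the $\int\kappa\,ds=2\pi$ cancellation, an integration by parts moving the second derivative in $\kappa_1-\kappa_2$ onto the coefficient $\bar a\mu_2/(1-\bar a)$, and the self-consistency inversion $1+K>\tfrac12$ (your monotonicity computation for $\bar a/(1-\bar a)$ on $(-1,1)$ is correct). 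This buys you a Gr\"onwall on $\|w\|_{L^2}^2$ directly, dispensing with both the Poincar\'e lemma and the final volume argument. Both routes lean on the same unproved-by-you ingredient: the a priori $W^{3,\infty}$ bounds on the solutions (the paper's Lemma \ref{lem_usigsig_est}, a maximum-principle bootstrap), which you only gesture at but correctly identify as the place where the $W^{3,\infty}$ hypotheses enter, since $\partial_u\bar a$ requires $\partial_u V_i$ and hence third spatial derivatives of the height functions.

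The one concrete soft spot is your weight. You put $E(t)=\int_I\alpha w^2$ with $\alpha$ containing the factor $1-\bar a$, and you must then control $\int\alpha_t w^2$, i.e.\ bound $\partial_t\bar a$ in $L^\infty$. But $\partial_t\bar a$ involves $\partial_t V_i$, and $\partial_t V_i=g'(\kappa_i-\lambda_i)(\partial_t\kappa_i-\lambda_i')$ with $\partial_t\kappa_i$ containing $\partial_{uu}\partial_t d_i=\partial_{uu}(V_i/\nu_i)$, hence $\partial_{uu}\kappa_i$, hence \emph{fourth} spatial derivatives of $d_i$ --- not controlled by a $W^{3,\infty}$ bootstrap under the stated hypotheses. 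This is exactly why the paper's weight $\tfrac{(1-u\kappa_0)^2}{S^3(u)}$ contains only $u$ and $u_\sigma$, so that its time derivative involves only $u_t,u_{\sigma t}$, which the bootstrap does control. The fix for your version is cheap: drop $\bar a$ from the weight (or use the unweighted $\int w^2$); the cross term $\int p\,\partial_u(1/\alpha)\,w\,\partial_u w$ that reappears after integration by parts is harmless by Young's inequality, since it only needs $\partial_u\bar a\in L^\infty$, which the bootstrap does give. With that modification your argument closes.
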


\begin{remark}\label{rem_phase}
Recalling the definition of $\Phi_\beta$ above, it is clear that $\Phi_\beta\in C^\infty$ so the regularity assumptions on $\Phi_\beta$ in Theorem \ref{thm_unique} are natural and biologically relevant.
\end{remark}

In order to prove Theorem \ref{thm_unique} we first recast the geometric evolution equation as an equivalent PDE, see \cite{Ell97,MizBerRybZha15}.


Fix an initial curve $\Gamma_0\in W^{3,\infty}$. Let $
\tilde{\Gamma}\in C^\infty$ be a fixed reference curve close to $\Gamma_0$ parametrized by arc length $\sigma\in I$, with signed curvature $\kappa_0(\sigma)$ and inward pointing normal vector $\nu(s)$. Consider the tubular neighborhood
\begin{equation}\label{unbhd}
U_{\delta_0}:=\{x\in\mathbb{R}^2 | \operatorname{dist}(x,\tilde{\Gamma})<2\delta_0\}.
\end{equation}
For sufficiently small $\delta_0$, there exists a well defined diffeomorphism
\begin{equation}
Y\colon I \times (-2\delta_0,2\delta_0)\rightarrow U_{\delta_0}, Y(\sigma,u) := \tilde{\Gamma}(\sigma)+u(\sigma)\nu(\sigma).
\end{equation}
That is, for a function $u\colon I\times [0,T]\to [-\delta_0,\delta_0]$, periodic in $\sigma$, we have a well-defined correspondence
\begin{equation}\label{curvetou}
\Gamma(\sigma,t) = \tilde{\Gamma}(\sigma)+u(\sigma,t)\nu(\sigma)
\end{equation}
between $\Gamma(t)$ and $u(\cdot,t)$. 
In particular, there exists periodic $u_0 \in W^{3,\infty}(I)$ which parametrizes $\Gamma_0$:
\begin{equation}
\Gamma_0(\sigma) = \tilde{\Gamma}(\sigma) + u_0(\sigma)\nu(\sigma).
\end{equation}
We assume without loss of generality that $\delta_0$ chosen in \eqref{unbhd} is sufficiently small so that 
\begin{equation}
\delta_0\|\kappa_0\|_{L^\infty}<1.
\end{equation}

Using the Frenet-Serre formulas we express the normal velocity $V$ of $\Gamma(t)$ as
\begin{equation}\label{veqn}
V = V(u) = 
 \frac{1-u\kappa_0}{S}u_t
\end{equation}
where $S=S(u)= \sqrt{u^2_\sigma + (1-u\kappa_0)^2}$.
Curvature of $\Gamma(t)$ is given by
\begin{equation}\label{curvature}
\kappa(u) = 
\frac{1}{S^3} \Bigl((1-u \kappa_0) u_{\sigma\sigma} + 2\kappa_0 u_\sigma^2+(\kappa_0)_\sigma u_\sigma u  +
\kappa_0(1-u \kappa_0)^2\Bigr).
\end{equation}
Thus, we rewrite \eqref{interface} as the following PDE for $u$:
 \begin{equation}\label{deqn}
 \begin{aligned}
\frac{1-u\kappa_0}{S}u_t -\Phi_\beta\left(\frac{1-u\kappa_0}{S}u_t\right)
 &= \kappa(u)-\frac{1}{L[u]}\Bigl( \int_I \Phi_\beta\left(\frac{1-u \kappa_0}{S} u_t\right)Sd\sigma+2\pi\Bigr),
\end{aligned}
\end{equation}
where $L[u] := \int_I S(u) d\sigma$. Since the non-local term in \eqref{deqn} is independent of $\sigma$, for simplicity we may rewrite \eqref{deqn} as
 \begin{equation}\label{deqn1}
 \begin{aligned}
\frac{1-u\kappa_0}{S}u_t -\Phi_\beta\left(\frac{1-u\kappa_0}{S}u_t\right)
 &= \kappa(u)-\lambda_u(t)
\end{aligned}
\end{equation}
for some function $\lambda_u(t)\in L^\infty(\mathbb{R})$ which depends on $u$.

%

In \cite{MizBerRybZha15} short time existence of solutions $u$ to \eqref{deqn} was proved for a general class of initial conditions:
\begin{theorem}\label{thm_exist}
Let $u_0\in W^{1,\infty}(I)$. Then, there exists a time $T>0$ and a $u\in L^2(0,T; H^2(I))$ with $u_t \in L^2(0,T;L^2(I))$ so that $u$ solves \eqref{deqn} with $u(\sigma,0)=u_0(\sigma)$.
\end{theorem}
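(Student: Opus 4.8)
The plan is to use the subcritical hypothesis $\|\Phi_\beta'\|_{L^\infty}<1$ to invert the nonlinearity, rewrite \eqref{deqn1} as an explicit uniformly parabolic equation for $u$, and then solve it by a contraction-mapping argument on a short time interval. Since $g(V):=V-\Phi_\beta(V)$ has $g'=1-\Phi_\beta'\in(0,2)$ bounded away from zero, it is a bi-Lipschitz bijection of $\mathbb{R}$; let $G:=g^{-1}$, which is Lipschitz with $\|G'\|_{L^\infty}\le(1-\|\Phi_\beta'\|_{L^\infty})^{-1}$. Writing $V=\frac{1-u\kappa_0}{S}u_t$, equation \eqref{deqn1} becomes $V=G(\kappa(u)-\lambda_u)$. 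I would first dispose of the nonlocal multiplier: substituting $\Phi_\beta(V)=V-g(V)=V-(\kappa(u)-\lambda_u)$ into the definition of $\lambda_u$ and using $\int_I\kappa(u)S\,d\sigma=2\pi$ and $\int_I S\,d\sigma=L[u]$, the defining relation for $\lambda_u$ is seen to be equivalent to the area constraint $\int_I V\,S\,d\sigma=0$, i.e.\ to the scalar equation $\int_I G(\kappa(u)-\lambda)\,S\,d\sigma=0$. Because the left side is strictly decreasing in $\lambda$ (as $G'>0$ and $S>0$) and sweeps out all of $\mathbb{R}$, this has a unique root $\lambda=\lambda[u]$, and the implicit function theorem yields that $\lambda[\cdot]$ is Lipschitz with respect to the norms introduced below.

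With the multiplier resolved, \eqref{deqn1} is equivalent to the scalar evolution equation
\begin{equation*}
u_t=\frac{S}{1-u\kappa_0}\,G\bigl(\kappa(u)-\lambda[u]\bigr),\qquad u(\cdot,0)=u_0.
\end{equation*}
From \eqref{curvature} the top-order part of $\kappa(u)$ is $\frac{1-u\kappa_0}{S^3}u_{\sigma\sigma}$, so the linearized diffusion coefficient is $\frac{G'}{S^2}>0$. I would work in the parabolic space $X_T:=\{u\in L^2(0,T;H^2(I)):u_t\in L^2(0,T;L^2(I))\}$, whose norm controls exactly the regularity claimed in the statement. The continuous embeddings $X_T\hookrightarrow C([0,T];H^1(I))$ and $H^1(I)\hookrightarrow C(I)$ control $u$ in $L^\infty$, while the $L^2(0,T;H^2)$ bound controls $u_\sigma$ in $L^2(0,T;L^\infty)$; choosing $T$ small keeps $u$ inside the tubular neighborhood \eqref{unbhd}, so that $1-u\kappa_0\ge c_0>0$, $S$ is bounded above and below, and the operator is genuinely uniformly parabolic.

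The fixed point is set up by freezing the nonlinear coefficients at a previous iterate. Given $w$ in a closed ball $B_R\subset X_T$ with $w(0)=u_0$, I would define $u=\mathcal{T}(w)$ as the unique solution in $X_T$ of the linear parabolic problem obtained from \eqref{deqn1} by retaining only the $u_{\sigma\sigma}$ term as the unknown principal part and evaluating every other coefficient, together with $\Phi_\beta(V_w)$ (with $V_w=\frac{1-w\kappa_0}{S(w)}w_t$) and $\lambda[w]$, at $w$:
\begin{equation*}
\frac{1-w\kappa_0}{S(w)}\,u_t-\frac{1-w\kappa_0}{S(w)^3}\,u_{\sigma\sigma}=R[w]+\Phi_\beta(V_w)-\lambda[w],\qquad u(\cdot,0)=u_0,
\end{equation*}
where $R[w]$ collects the lower-order terms of \eqref{curvature} evaluated at $w$. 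Standard linear parabolic theory (e.g.\ Galerkin together with energy estimates) produces a unique $u\in X_T$: multiplying by $u_t$ and integrating gives $u\in L^\infty(0,T;H^1)$ and $u_t\in L^2(0,T;L^2)$, and then reading $u_{\sigma\sigma}$ off the equation gives $u\in L^2(0,T;H^2)$, exactly the desired class. A direct computation shows that a fixed point $u=\mathcal{T}(u)$ recovers $V-\Phi_\beta(V)=\kappa(u)-\lambda_u$, i.e.\ solves \eqref{deqn}. It then remains to verify that for $R$ large and $T$ small $\mathcal{T}$ maps $B_R$ into itself, and that $\mathcal{T}$ is a contraction on $B_R$; both follow from energy estimates for the linear problem satisfied by $\mathcal{T}(w_1)-\mathcal{T}(w_2)$, whose right-hand side consists of differences of the frozen coefficients and of $\Phi_\beta(V_{w_i})$ and $\lambda[w_i]$, each controlled by the Lipschitz bounds on $G$, $\Phi_\beta$ and $\lambda[\cdot]$ and carrying a positive power of $T$.

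The main obstacle I anticipate is the contraction estimate at top order: the difference $\mathcal{T}(w_1)-\mathcal{T}(w_2)$ satisfies a parabolic equation in which the discrepancy of the frozen second-order coefficients $\frac{1}{S(w_1)^2}$ and $\frac{1}{S(w_2)^2}$ multiplies second derivatives, and this must be absorbed into the parabolic energy with a constant strictly less than one. This is precisely where uniform parabolicity (hence the need to keep $u$ in the tubular neighborhood) and the smallness of $T$ are essential, and where the Lipschitz dependence of the nonlocal multiplier $\lambda[\cdot]$ must be quantified carefully so as not to spoil the contraction.
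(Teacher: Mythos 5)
First, a point of reference: the paper does not actually prove Theorem \ref{thm_exist} --- it is imported verbatim from \cite{MizBerRybZha15} --- so your proposal can only be judged on its own terms. Your two reduction steps are correct and are well aligned with how the paper uses the same structures elsewhere: inverting $g(V)=V-\Phi_\beta(V)$ is exactly where subcriticality enters, and your identification of the nonlocal multiplier with the unique root $\lambda[u]$ of $\int_I G(\kappa(u)-\lambda)\,S\,d\sigma=0$ rests on the same identity ($\int_I V S\,d\sigma=0$, via $\int_I\kappa(u) S\,d\sigma=2\pi$) that the paper derives and uses in the proof of Theorem \ref{thm_unique}. The resulting equation $u_t=\frac{S}{1-u\kappa_0}\,G(\kappa(u)-\lambda[u])$ is indeed uniformly parabolic while $u$ stays in the tubular neighborhood, and the frozen-coefficient linear problem is solvable in $X_T$ by the energy method (multiply by $-u_{\sigma\sigma}$; for this scalar structure bounded measurable coefficients suffice).

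The genuine gap is the contraction step, and it is not a technicality that ``smallness of $T$'' repairs. (i) At top order, the equation for $\delta=\mathcal{T}(w_1)-\mathcal{T}(w_2)$ carries the forcing $\bigl(\frac{1-w_1\kappa_0}{S(w_1)^3}-\frac{1-w_2\kappa_0}{S(w_2)^3}\bigr)\partial_\sigma^2\mathcal{T}(w_2)$, which must be bounded in $L^2(0,T;L^2)$ to close any energy estimate; since $S$ depends on $w_\sigma$, this requires control of $\|w_1-w_2\|_{L^\infty(0,T;W^{1,\infty}(I))}$, which the $X_T$-norm does not provide ($X_T\hookrightarrow C([0,T];H^1(I))$ only, and $H^1(I)$ does not embed in $W^{1,\infty}(I)$). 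Moreover, no positive power of $T$ is available anywhere in this term: the embedding constant of $X_T\hookrightarrow C([0,T];H^1)$ for functions vanishing at $t=0$ is $T$-independent, and $\|\partial_\sigma^2\mathcal{T}(w_2)\|_{L^2(0,T;L^2)}$ does not shrink uniformly over $B_R$ as $T\to 0$ because $u_0$ is only assumed in $W^{1,\infty}(I)$, not $H^2(I)$; that quantity stays of order $\|u_0\|_{H^1}$. (ii) A second leak: your frozen source $\Phi_\beta(V_w)$ reintroduces $w_t$ on the right-hand side, so the difference estimate produces $\|\Phi_\beta'\|_{L^\infty}\|w_{1,t}-w_{2,t}\|_{L^2(0,T;L^2)}$ against a left-hand side that only controls $\min\bigl(\frac{1-w_1\kappa_0}{S(w_1)}\bigr)\|\delta_t\|_{L^2(0,T;L^2)}$; since $\frac{1-u\kappa_0}{S(u)}$ can be far below $1$ for Lipschitz data with large slope, $\|\Phi_\beta'\|_{L^\infty}<1$ alone does not make this factor less than one. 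Item (ii) is fixable by using your own $G$-form consistently (no $w_t$ on the right), but (i) remains and defeats Banach contraction in $X_T$ for $W^{1,\infty}$ data. The workable completions are compactness-based: either a Schauder/Leray--Schauder fixed point for $\mathcal{T}$ using parabolic compactness (Aubin--Lions), or mollification of $u_0$, solution for smooth data, and passage to the limit; in both cases the required uniform bounds are precisely the maximum-principle estimates on $\|u\|_{L^\infty}$ and $\|u_\sigma\|_{L^\infty}$ that the paper quotes from \cite{MizBerRybZha15} as Lemma \ref{lem_unif_bound}, and these a priori estimates are entirely absent from your proposal.
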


\begin{remark}
The time of uniqueness in Theorem \ref{thm_unique} may be less than the time of existence in Theorem \ref{thm_exist}; in fact, the proof of Theorem \ref{thm_unique} establishes uniqueness so long as $\Gamma(t)\in W^{3,\infty}$.
\end{remark}

We recall the following lemma proved in \cite{MizBerRybZha15} which will be useful in the sequel:
\begin{lemma}\label{lem_unif_bound}
Let $u$ be a solution of \eqref{deqn} (with initial value $u_0$) on the interval $[0,T]$ satisfying $\|u(t)\|_{L^\infty(I)}\leq \delta_0$ for all $t<T$. Then
\begin{equation}
\|u(t)\|_{L^\infty(I)} \leq \|u_0\|_{L^\infty(I)}+ Rt
\end{equation}
where $R\geq 0$ is a constant independent of $u_0$. Furthermore, the following inequality holds
\begin{equation}
\|u_\sigma(t)\|_{L^\infty(I)} \leq a_1(t),
\end{equation}
where $a_1(t)$ is the solution of
\begin{equation}
\frac{d a_1}{d t} = P_1 a_1^2 + Q_1 a_1 + R_1, \; a_1(0) = \|(u_0)_\sigma\|_{L^\infty(I)}
\end{equation}
(continued by $+\infty$ after the blow-up time) and $P_1, Q_1, R_1$ are positive constants independent of $u_0$.
\end{lemma}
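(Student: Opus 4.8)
The plan is to rewrite \eqref{deqn1} as an explicit quasilinear parabolic equation and then obtain both bounds from the maximum principle, applied to $u$ for the first inequality and to $w := u_\sigma$ for the second. Since $\|\Phi_\beta'\|_{L^\infty} < 1$, the map $g(V) := V - \Phi_\beta(V)$ is strictly increasing with $g' \geq 1 - \|\Phi_\beta'\|_{L^\infty} > 0$, so it has a globally Lipschitz inverse $G := g^{-1}$ with $\|G'\|_{L^\infty} \leq (1 - \|\Phi_\beta'\|_{L^\infty})^{-1}$. Solving \eqref{deqn1} for $u_t$ and using \eqref{veqn} yields
\begin{equation*}
u_t = A(u, u_\sigma)\, G\bigl(\kappa(u) - \lambda_u(t)\bigr), \qquad A := \frac{S}{1 - u\kappa_0},
\end{equation*}
where $1 - u\kappa_0 \geq 1 - \delta_0\|\kappa_0\|_{L^\infty} > 0$. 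Because $G$ is increasing and the principal part of $\kappa(u)$ in \eqref{curvature} is $(1-u\kappa_0)S^{-3}u_{\sigma\sigma}$ with positive coefficient, this equation is (degenerate) parabolic, so the maximum principle is available. I would record at the outset two $u_0$-independent facts that will feed the extremum computations: first, $\Phi_\beta$ is globally bounded (a one-line energy estimate on \eqref{phi1} gives $\|\psi(\cdot;V)\|_{L^2} \leq \beta\|\theta_0'\|_{L^2}$ uniformly in $V$, whence $|\Phi_\beta(V)| \leq \beta\|\theta_0'\|_{L^2}\,\|(\theta_0')^2\|_{L^2}$); second, $L[u] = \int_I S\,d\sigma \geq (1-\delta_0\|\kappa_0\|_{L^\infty})\,|I| =: L_{\min}$, so that $|\lambda_u(t)| \leq \|\Phi_\beta\|_{L^\infty} + 2\pi/L_{\min} =: \Lambda$, independent of $u_0$.

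For the first inequality I would track $M(t) := \max_\sigma u(\sigma,t)$ and $N(t) := \min_\sigma u(\sigma,t)$ via Hamilton's trick. At a maximizing $\sigma^*$ one has $u_\sigma = 0$ and $u_{\sigma\sigma} \leq 0$, so $S = 1 - u\kappa_0$, $A = 1$, and by \eqref{curvature}
\begin{equation*}
\kappa(u)\big|_{\sigma^*} = \frac{u_{\sigma\sigma}}{(1-u\kappa_0)^2} + \frac{\kappa_0}{1-u\kappa_0} \leq \frac{\|\kappa_0\|_{L^\infty}}{1-\delta_0\|\kappa_0\|_{L^\infty}} =: \kappa_{\max}.
\end{equation*}
Monotonicity of $G$ then gives $\tfrac{d}{dt}M = G(\kappa(u)|_{\sigma^*} - \lambda_u) \leq G(\kappa_{\max} + \Lambda)$, and the symmetric computation at the minimum gives $\tfrac{d}{dt}N \geq G(-\kappa_{\max} - \Lambda)$. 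Setting $R := \max\{G(\kappa_{\max}+\Lambda),\, -G(-\kappa_{\max}-\Lambda)\}$, which is nonnegative and $u_0$-independent, yields $\|u(t)\|_{L^\infty} \leq \|u_0\|_{L^\infty} + Rt$.

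For the gradient bound I would differentiate the quasilinear equation in $\sigma$ and run the same extremum argument on $w = u_\sigma$. Writing $u_t = AV$ with $V = G(\kappa(u) - \lambda_u)$, differentiation gives $w_t = A_\sigma V + A\,G'(\kappa - \lambda_u)\,\kappa_\sigma$; the non-local $\lambda_u$ enters only through the uniformly bounded factor $G'$ and so creates no new nonlinearity. At a point where $w$ attains its spatial maximum, $w_\sigma = u_{\sigma\sigma} = 0$, which makes the third-derivative (diffusion) contribution $A\,G'\,(1-u\kappa_0)S^{-3}w_{\sigma\sigma}$ nonpositive and makes $\kappa(u)$—hence $V$—bounded uniformly in $w$ there. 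The only term that can grow is $A_\sigma V$: a direct computation at $u_{\sigma\sigma} = 0$ gives $A_\sigma = (w\kappa_0 + u(\kappa_0)_\sigma)\,w^2\,[(1-u\kappa_0)^2 S]^{-1}$, and since $w^2/S \leq |w|$ this is $O(w^2)$ with $u_0$-independent coefficients, while the remaining lower-order pieces of $A\,G'\kappa_\sigma$ are $O(|w|)$. Collecting signs one obtains $\tfrac{d}{dt}\max_\sigma w \leq P_1(\max_\sigma w)^2 + Q_1\max_\sigma w + R_1$, together with the analogous lower bound at the minimum, so that $\|u_\sigma\|_{L^\infty} = \max_\sigma|w|$ obeys the stated Riccati inequality with $P_1, Q_1, R_1 > 0$ depending only on $\delta_0$, $\|\kappa_0\|_{W^{2,\infty}}$, $\|\Phi_\beta'\|_{L^\infty}$, $\|\Phi_\beta\|_{L^\infty}$ and $\Lambda$. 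The scalar comparison principle against the ODE for $a_1$ then gives $\|u_\sigma(t)\|_{L^\infty} \leq a_1(t)$ up to the blow-up time.

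The main obstacle is twofold. Analytically, the delicate point is the bookkeeping in the $w$-equation: one must verify that, after using $u_{\sigma\sigma}=0$ at the extremum and the linear growth $S \sim |w|$, every surviving term is genuinely at most quadratic in $w$ with constants that do not see $u_0$—in particular that $V$ stays bounded there (which is exactly where boundedness of $\Phi_\beta$ and of $\lambda_u$ is essential) and that the diffusion term carries the favorable sign. Technically, Theorem \ref{thm_exist} supplies only $u \in L^2(0,T;H^2)$, whereas differentiating the equation in $\sigma$ and applying Hamilton's trick to $\max_\sigma w$ demands more regularity; I would therefore first establish these a priori bounds for smooth solutions—e.g.\ those of a parabolic regularization of \eqref{deqn} or of the system \eqref{numeric_veqn1}--\eqref{numeric_aeqn1}—and pass to the limit, or invoke interior parabolic smoothing to argue on $[\tau,T]$ and then let $\tau \to 0$.
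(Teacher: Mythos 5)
Your proposal is correct, and it takes essentially the same route as the source: this paper does not actually reprove Lemma \ref{lem_unif_bound} (it is recalled from \cite{MizBerRybZha15}), but both the proof there and this paper's proof of the analogous higher-order estimate, Lemma \ref{lem_usigsig_est}, proceed exactly as you do --- invert $V \mapsto V - \Phi_\beta(V)$ using $\|\Phi_\beta'\|_{L^\infty} < 1$, bound the Lagrange multiplier $\lambda_u$ uniformly via the boundedness of $\Phi_\beta$ and the lower bound on $L[u]$, and then run a maximum-principle/comparison-ODE argument at spatial extrema of $u$ and $u_\sigma$, where the top-order derivative carries the favorable sign. Your closing regularity caveat (establishing the bounds for smooth approximations and passing to the limit) is also precisely how the paper disposes of the same issue in the proof of Lemma \ref{lem_usigsig_est}.
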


We first utilize the additional smoothness on initial conditions and $\Phi_\beta$ in order to establish analogous uniform estimates on $u_{\sigma\sigma}$ and $u_{\sigma\sigma\sigma}$.

\begin{lemma}\label{lem_usigsig_est}
Suppose that $\Phi_\beta\in W^{3,\infty}(\mathbb{R})$ is a Lipschitz function satisfying 
\begin{equation}
\|\Phi_\beta'\|_{L^\infty(\mathbb{R})} < 1.
\end{equation}
Then for any $u_0 \in W^{3,\infty}(I)$ with $\|u_0\|_{L^\infty(I)}<\delta_0$, then there exists a time $T>0$ such that a solution $u$ of \eqref{deqn} exists on $[0,T]$ satisfying
\begin{equation}
\|u(t)\|_{W^{3,\infty}(I)} <\infty
\end{equation}
for all $t< T$.
\end{lemma}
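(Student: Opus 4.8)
The plan is to treat \eqref{deqn1} as a genuinely quasilinear parabolic equation for $u$ and to propagate the $W^{3,\infty}$ regularity of the initial datum by the same maximum-principle and ODE-comparison strategy used to bound $u_\sigma$ in Lemma \ref{lem_unif_bound}. First I would exploit the subcritical hypothesis: since $\|\Phi_\beta'\|_{L^\infty}<1$, the map $g(V):=V-\Phi_\beta(V)$ satisfies $g'=1-\Phi_\beta'\geq 1-\|\Phi_\beta'\|_{L^\infty}>0$, so it is a bi-Lipschitz bijection of $\mathbb{R}$; because $\Phi_\beta\in W^{3,\infty}$, the inverse $g^{-1}$ is $C^3$ with all derivatives through order three bounded (via $(g^{-1})'=1/(g'\circ g^{-1})$, $(g^{-1})''=-(g''\circ g^{-1})/(g'\circ g^{-1})^3$, and so on). Solving \eqref{deqn1} for the velocity gives $V=g^{-1}(\kappa(u)-\lambda_u(t))$, hence
\begin{equation*}
u_t = \frac{S}{1-u\kappa_0}\, g^{-1}\bigl(\kappa(u)-\lambda_u(t)\bigr) =: f(\sigma,u,u_\sigma,u_{\sigma\sigma}).
\end{equation*}
Using $\partial\kappa/\partial u_{\sigma\sigma}=(1-u\kappa_0)/S^3$ from \eqref{curvature}, a direct computation yields $f_{u_{\sigma\sigma}}=(g^{-1})'/S^2$. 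Since $(g^{-1})'$ is bounded above and below away from zero and, by the Lemma \ref{lem_unif_bound} bounds on $\|u\|_{L^\infty}$ and $\|u_\sigma\|_{L^\infty}$ together with $\delta_0\|\kappa_0\|_{L^\infty}<1$, the quantity $S^2=u_\sigma^2+(1-u\kappa_0)^2$ is also bounded above and below away from zero, the equation is uniformly parabolic; moreover $f$ is a smooth function of its arguments with all relevant derivatives bounded in terms of $\|\Phi_\beta\|_{W^{3,\infty}}$, the smoothness of $\kappa_0$, and those a priori bounds.

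Next I would run the higher-order a priori estimates. Because $\lambda_u(t)$ is independent of $\sigma$, differentiation in $\sigma$ never produces derivatives of $\lambda_u$; the nonlocal term enters only through the (bounded) argument of $g^{-1}$. Setting $w:=u_{\sigma\sigma}$ and differentiating the equation twice in $\sigma$ gives $\partial_t w = f_{u_{\sigma\sigma}}\,w_{\sigma\sigma}+(\text{terms in }w_\sigma,\,w,\,u_\sigma,\,u)$. The crucial structural point is that every term carrying the top-order derivative $w_\sigma=u_{\sigma\sigma\sigma}$ (in particular the $f_{u_{\sigma\sigma}u_{\sigma\sigma}}(u_{\sigma\sigma\sigma})^2$ contribution) vanishes at an interior maximum of $w$, where $w_\sigma=0$ and $w_{\sigma\sigma}\leq 0$. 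Evaluating there, the diffusion term $f_{u_{\sigma\sigma}}w_{\sigma\sigma}$ is nonpositive and only polynomially nonlinear lower-order terms survive, producing a Riccati inequality $\tfrac{d}{dt}\max_\sigma w \leq P_2(\max_\sigma w)^2 + Q_2\max_\sigma w + R_2$ (and symmetrically for $\min_\sigma w$) with coefficients depending only on the already-controlled quantities. ODE comparison, exactly as in Lemma \ref{lem_unif_bound}, then bounds $\|u_{\sigma\sigma}\|_{L^\infty}$ for short time. Repeating one order higher for $r:=u_{\sigma\sigma\sigma}$ produces an at-worst-cubic differential inequality for $\max_\sigma r$ (the cubic arising from the $f_{u_{\sigma\sigma}u_{\sigma\sigma}u_{\sigma\sigma}}$ term), with coefficients now also controlled by $\|u_{\sigma\sigma}\|_{L^\infty}$; since such polynomial ODEs have a strictly positive blow-up time, $\|u_{\sigma\sigma\sigma}\|_{L^\infty}$ stays finite on a short interval.

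Finally I would make these a priori bounds rigorous by combining them with an approximation procedure (a regularized or semidiscrete scheme, or the standard theory for uniformly parabolic quasilinear equations), passing to the limit to obtain a genuine solution of \eqref{deqn} with datum $u_0$, and choosing $T>0$ below the blow-up times of all the comparison ODEs so that $\|u(t)\|_{W^{3,\infty}}<\infty$ for every $t<T$.

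The main obstacle I expect is precisely the fully quasilinear structure: because the curvature $u_{\sigma\sigma}$ sits inside the nonlinear $g^{-1}$, differentiating the equation generates top-order terms whose control is not automatic, and the whole argument hinges on the observation that at each order the highest derivative enters linearly and is annihilated at an interior extremum, leaving only lower-order terms with coefficients bounded by the previously established estimates. A secondary technical point is justifying the pointwise maximum-principle computation for a solution that a priori lives only in $L^2(0,T;H^2)$, which is exactly why the regularization/bootstrap step must precede reading off the estimates.
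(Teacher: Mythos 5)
Your proposal follows essentially the same route as the paper: both exploit $\|\Phi_\beta'\|_{L^\infty}<1$ to make the equation uniformly parabolic (your $(g^{-1})'/S^2$ is exactly the paper's coefficient $1/(S^2(1-\Phi_\beta'(V)))$), differentiate twice in $\sigma$, observe that at an interior extremum of $u_{\sigma\sigma}$ the terms carrying $u_{\sigma\sigma\sigma}$ vanish and the diffusion term has a sign, derive a polynomial differential inequality for $\max_\sigma u_{\sigma\sigma}$ closed by ODE comparison (the paper's comparison ODE is quartic rather than Riccati, an immaterial difference), and then bootstrap to $u_{\sigma\sigma\sigma}$ with the formal computation justified by an approximation argument. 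This is correct and matches the paper's proof in all essentials.
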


\begin{proof}
Theorem \ref{thm_exist} and Lemma \ref{lem_unif_bound} imply that there exists a time $T>0$ so that any solution of \eqref{deqn} satisfies $\sup_{t\in[0,T]}\|u(t)\|_{L^\infty(I)}<\delta_0$ and $\sup_{t\in[0,T]}\|u_\sigma(t)\|_{L^\infty(I)}\leq C <\infty$, so to establish the claim we need only prove uniform estimates on $u_{\sigma\sigma}$ and $u_{\sigma\sigma\sigma}$. This will be done via a bootstrapping argument, establishing a maximum principle first for $u_{\sigma\sigma}$ and subsequently for $u_{\sigma\sigma\sigma}$. \\
First write \eqref{deqn} as
\begin{equation}\label{apriori1}
V(u) = \Phi_\beta(V(u))+ \kappa(u)-\lambda_u(t)
\end{equation}
and take a derivative in $\sigma$. After rearranging terms:
\begin{equation}\label{vsigma}
V_{\sigma}  = \frac{1}{1-\Phi_\beta'(V)}\kappa_\sigma,
\end{equation} 
Thus, taking a $\sigma$ derivative of \eqref{deqn} and using \eqref{vsigma}:
\begin{equation}\label{usigteqn}
u_{\sigma t} = \frac{S(u)}{1-u\kappa_0}\left(\frac{1}{1-\Phi_\beta'(V(u))} \kappa_\sigma - \left(\frac{1-u\kappa_0}{S(u)}\right)_\sigma u_t\right).
\end{equation}
Taking two $\sigma$ derivatives of \eqref{apriori1} (recalling the definition \eqref{veqn}):
\begin{align}\label{usigsigeqn}
\left(\frac{1-u\kappa_0}{S(u)}\right)_{\sigma\sigma} u_t &+ 2\left(\frac{1-u\kappa_0}{S(u)}\right)_{\sigma} u_{\sigma t}+ \left(\frac{1-u\kappa_0}{S(u)}\right) u_{\sigma\sigma t} \\
&= \Phi_\beta''(V(u))(V(u)_\sigma)^2+\Phi_\beta'(V(u)) V_{\sigma\sigma}+\kappa_{\sigma\sigma}.
\end{align}
Plug  \eqref{deqn}, \eqref{vsigma} and \eqref{usigteqn} into \eqref{usigsigeqn} to write
\begin{equation}\label{usigsigeqn1}
 u_{\sigma\sigma t} = \frac{u_{\sigma\sigma\sigma\sigma}}{S^2(u)(1-\Phi_\beta'(V(u)))} + f,
\end{equation}
where $f=f(u,u_\sigma,u_{\sigma\sigma},u_{\sigma\sigma\sigma},\Phi_\beta(V),\Phi_\beta'(V),\Phi_\beta''(V))$. Due to the smoothness of the initial conditions $u_0$, solutions of \eqref{usigsigeqn1} exist. Indeed, consider \eqref{usigsigeqn1} as a semilinear parabolic equation for $u_{\sigma\sigma}$:
\begin{equation}\label{higherorder_pde}
u_{\sigma\sigma t} = \frac{u_{\sigma\sigma\sigma\sigma}}{S^2(\tilde{u})(1-\Phi_\beta'(V(\tilde{u})))} + f(\tilde{u},\tilde{u}_\sigma,\tilde{u}_{\sigma\sigma},u_{\sigma\sigma\sigma},\Phi_\beta(V(\tilde{u}),\dots)),
\end{equation} 
where $\tilde{u}=\tilde{u}(x,t)$ is the solution of \eqref{deqn}. Solutions of \eqref{higherorder_pde} are guaranteed by classical theory for parabolic PDEs, e.g., \cite{Lad68}.  The compatibility condition that $ u_{\sigma\sigma} = \tilde{u}_{\sigma\sigma}$ follows from the formulation.

We note that $f$ has the form:
\begin{align}
f(u,u_\sigma,u_{\sigma\sigma},u_{\sigma\sigma\sigma}) &= f_1+f_2 u_{\sigma\sigma}+f_3 u_{\sigma\sigma}^2\\
&+f_4u_{\sigma\sigma}^3+f_5 u_{\sigma\sigma}^4+u_{\sigma\sigma\sigma}f_6(u,u_\sigma,u_{\sigma\sigma},u_{\sigma\sigma\sigma}),
\end{align}
where the $f_i=f_i(u,u_\sigma,\Phi_\beta,\Phi_\beta',\Phi_\beta'')$ are uniformly bounded by positive constants, $C_i>\|f_i\|_{L^\infty}$.\newline

Let $b_\ve(t)$ satisfy
\begin{equation}\label{beqn}
\dot{b}_\ve = P_1 +P_2b_\ve +P_3b_\ve^2+P_4b_\ve^3+P_5b_\ve^4
\end{equation}
with $P_i\geq C_i$ fixed constants independent of $u_0$ and $\ve$, with initial condition $b_\ve(0)=\|u_{\sigma\sigma}(0)\|_{L^\infty(I)}+\varepsilon$. Define $T_\ve^* := \sup\{t\geq 0 \colon b_\ve(t)<\infty \}$ and 
\begin{equation}
t_0 = \sup\{t\geq 0; u_{\sigma\sigma}(\sigma,\tau)\leq b_\ve(\tau),\, \forall 0\leq\tau\leq t, \forall\sigma\in I\},
\end{equation} 
the maximal time such that $u_{\sigma\sigma}(\sigma,\tau)\leq b_\ve(\tau)$ for all $\sigma$. Note that $t_0>0$ for all $\ve>0$. \newline

If $t_0\leq\min\{T^*_\ve,T\}$ then choose $\sigma_0$ so that $u_{\sigma\sigma}(\sigma_0,t_0)=\|u_{\sigma\sigma}(t_0)\|_{L^\infty(I)}$. At the point $(\sigma_0,t_0)$ we have $u_{\sigma\sigma\sigma\sigma}(\sigma_0,t_0)\leq 0$, and $u_{\sigma\sigma\sigma}(\sigma_0,t_0)=0$. Moreover, $u_{\sigma\sigma t}(\sigma_0,t_0)\geq \dot{b}_\ve(t_0)$ and $u_{\sigma\sigma}(\sigma_0,t_0)=b_\ve(t_0)$. Then 
\begin{align}\label{max_bound}
\dot{b} &\leq u_{\sigma\sigma t}  = \frac{u_{\sigma\sigma\sigma\sigma}}{S^2(u)(1-\Phi_\beta'(V(u)))}+f \\ 
&< C_1+C_2 u_{\sigma\sigma}+C_3 u_{\sigma\sigma}^2+C_4u_{\sigma\sigma}^3+C_5 u_{\sigma\sigma}^4.
\end{align}
However, \eqref{max_bound} contradicts \eqref{beqn}, thus $t_0> \min\{T_\ve^*,T\}$. Taking the limit $\ve\to 0$ we recover
\begin{equation}
u_{\sigma\sigma}(\sigma,t)\leq b_0(t)
\end{equation}
for all $\sigma\in I$ and all $t\leq T_0^*$. The reverse inequality follows similarly, and thus
\begin{equation}
\|u_{\sigma\sigma}(t)\|_{L^\infty(I)}\leq b(t)
\end{equation}
for all $t<T_0^*$. Note that this result relies on continuity of $t\mapsto \|u_{\sigma\sigma}\|_{L^\infty}$; this property holds for smooth initial data and smooth approximations of coefficient functions. Relaxing to the general case is done in the usual way by use of an approximating sequence.  \newline
Using a bootstrapping argument, we can prove a similar maximum principle to establish uniform estimates on $u_{\sigma\sigma\sigma}$ on some (possibly shorter) time interval $[0,T^{**}]$; we omit the details here.
\end{proof}


Although solutions of \eqref{deqn} do not necessarily satisfy a Poincar\'e inequality we prove that the difference between solutions does.


\begin{lemma}\label{lem_poincare}
Let $u$ and $v$ be solutions of \eqref{deqn} with the same initial condition $u_0\in W^{1,\infty}(I)$ on some shared interval $[0,T]$. Then, their difference $w:=u-v$ satisfies
\begin{equation}
\|w(t)\|_{L^2(I)}\leq C \|w_\sigma(t)\|_{L^2(I)}
\end{equation}
for all $t\in [0,T]$.
\end{lemma}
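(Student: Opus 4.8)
The plan is to exploit the area-preservation constraint built into \eqref{interface}. Since $u$ and $v$ correspond via \eqref{curvetou} to families of curves evolving by the volume-preserving flow \eqref{interface} from the \emph{same} initial curve $\Gamma_0$, the regions they enclose have one and the same (constant) area for every $t\in[0,T]$. Expressing this constraint in the tubular coordinates will control the mean of $w=u-v$, after which the ordinary periodic Poincar\'e inequality finishes the proof. Neither $u$ nor $v$ individually has vanishing mean, so it is precisely the \emph{difference} of the area constraints that makes the statement possible.

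First I would write the enclosed area as a functional of $u$. Using the diffeomorphism $Y(\sigma,u)=\tilde{\Gamma}(\sigma)+u\nu(\sigma)$, whose Jacobian is $1-u\kappa_0$, the area enclosed by $\Gamma(\cdot,t)$ differs from the fixed area enclosed by $\tilde{\Gamma}$ by the signed area of the region swept in the normal direction, namely
\begin{equation*}
A[u]=A[\tilde{\Gamma}]-\int_I\Bigl(u-\tfrac12\kappa_0 u^2\Bigr)\,d\sigma .
\end{equation*}
Because $A[u(\cdot,t)]=A[v(\cdot,t)]=A[u_0]$ for all $t$, subtracting the two constraints and factoring $u^2-v^2=(u+v)w$ yields the identity
\begin{equation*}
\int_I w\,d\sigma=\frac12\int_I\kappa_0\,(u+v)\,w\,d\sigma .
\end{equation*}

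The key observation is that this identity controls the mean $\bar{w}:=|I|^{-1}\int_I w\,d\sigma$ by $w$ itself, but with a small coefficient. Using the a priori bounds $\|u(t)\|_{L^\infty},\|v(t)\|_{L^\infty}\le\delta_0$ together with the Cauchy--Schwarz inequality,
\begin{equation*}
|I|\,|\bar{w}|=\Bigl|\int_I w\,d\sigma\Bigr|\le\|\kappa_0\|_{L^\infty}\,\delta_0\,|I|^{1/2}\,\|w\|_{L^2(I)} .
\end{equation*}
Combining this with the standard periodic Poincar\'e inequality $\|w-\bar{w}\|_{L^2(I)}\le C_P\|w_\sigma\|_{L^2(I)}$ (valid since $w$ is periodic in $\sigma$) and the triangle inequality $\|w\|_{L^2}\le\|w-\bar{w}\|_{L^2}+|I|^{1/2}|\bar{w}|$ gives
\begin{equation*}
\|w\|_{L^2(I)}\le C_P\,\|w_\sigma\|_{L^2(I)}+\|\kappa_0\|_{L^\infty}\,\delta_0\,\|w\|_{L^2(I)} .
\end{equation*}

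The crucial final step is absorption: the standing hypothesis $\delta_0\|\kappa_0\|_{L^\infty}<1$ is exactly what permits moving the last term to the left, producing the claimed inequality with $C=C_P/(1-\delta_0\|\kappa_0\|_{L^\infty})$. Accordingly, the substance of the argument lies not in any estimate but in two structural facts: (i) the explicit quadratic form of $A[u]$ in the Frenet coordinates, and (ii) the exact conservation $A[u(\cdot,t)]=A[u_0]$ for solutions of \eqref{deqn}, which follows from the definition of the nonlocal Lagrange-multiplier term $\lambda$ in \eqref{interface}. I expect (ii) to be the main point to pin down carefully, since it must be justified at the level of the $H^2$ solutions furnished by Theorem \ref{thm_exist} rather than only formally. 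Once these are in hand, the geometric smallness condition $\delta_0\|\kappa_0\|_{L^\infty}<1$ does its work: the area constraint fixes the mean of $w$ strongly enough to recover a genuine Poincar\'e inequality for the difference of two solutions.
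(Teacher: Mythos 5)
Your proof is correct, but it takes a genuinely different route from the paper's. The paper also leans on area conservation, but qualitatively: it argues that if $u(\cdot,t)>v(\cdot,t)$ everywhere then $R(u)\subsetneq R(v)$ strictly (one can insert a small ball in the gap), contradicting equality of the enclosed areas; hence the two graphs intersect at some $\sigma_0(t)$, and the pointwise bound $|w(\sigma,t)|\le\int_{\sigma_0(t)}^{\sigma}|w_\sigma|\,ds\le |I|^{1/2}\|w_\sigma\|_{L^2(I)}$ gives the inequality directly after squaring and integrating. You instead make the area constraint quantitative via the explicit functional $A[u]=A[\tilde\Gamma]-\int_I(u-\tfrac12\kappa_0u^2)\,d\sigma$, obtaining the identity $\int_I w=\tfrac12\int_I\kappa_0(u+v)w$, which bounds the mean $\bar w$ and lets you conclude by Poincar\'e--Wirtinger plus absorption. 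Both hinges are legitimate: your conservation step (ii) is exactly the identity $\int_I(1-u\kappa_0)u_t\,d\sigma=0$ that the paper itself derives later in the proof of Theorem \ref{thm_unique}, so it holds at the level of the $H^2$ solutions; and the smallness $\delta_0\|\kappa_0\|_{L^\infty}<1$ that your absorption requires is a standing assumption of the paper. The trade-off: the paper's argument needs no smallness condition and yields the cruder constant $C=|I|$, but requires the slightly delicate topological step that non-intersecting graphs over $\tilde\Gamma$ enclose nested regions; your version replaces that with a two-line algebraic identity at the cost of invoking $\delta_0\|\kappa_0\|_{L^\infty}<1$. Either is acceptable here.
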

\begin{proof}
We prove that for each time $t\leq T$ there is a point $\sigma_0$ where the curves $u$ and $v$ intersect. To that end, suppose to the contrary that there exists some moment of time $t$ such that $u(\sigma,t)>v(\sigma,t)$ for all $\sigma\in I$. Let $R(u)$ be the region enclosed by the curve defined by $u$ (see equation \eqref{curvetou}) and likewise for $R(v)$. By continuity of $u$ and $v$ and the assumption that $u>v$, there exists a ball $B$ with sufficiently small radius so that $B \subset R(v)\setminus R(u)$.  Then,
\begin{equation}
\mu\{R(u)\}< \mu\{R(u)\}+\mu\{B\} \leq \mu\{R(v)\},
\end{equation}
where $\mu$ is the 2 dimensional Lebesgue measure. This contradicts preservation of volume. Thus for each moment of time, there exists a point $\sigma_0(t)\in I$ such that $u(\sigma_0(t),t)=v(\sigma_0(t),t)$. We can thus write
\begin{equation}
|w(\sigma,t)| \leq \int_{\sigma_0(t)}^\sigma |w_\sigma(s)|ds \leq |\sigma-\sigma_0(t)|^{1/2} \|w_\sigma\|_{L^2(I)}\leq |I|^{1/2}\|w_\sigma(t)\|_{L^2(I)}
\end{equation}
Squaring and integrating over $I$ yields the desired inequality. \end{proof}

 We are now prepared to prove Theorem \ref{thm_unique}. A natural approach is to assume the existence of two solutions $u$ and $v$ and to use equation \eqref{deqn} to write an equation for $w:=u-v$. Multiplying by an appropriate factor (e.g., $w_t$) and integrating over the space domain one expects that both principal terms containing $w_t$ (i.e., local term and non-local term) to have appropriate signs in order to establish a Gr\"onwall inequality for the growth of $\|w_\sigma(t)\|_{L^2(I)}$. 
 
 However, due to non-linearity, we can only guarantee that one of the two principal terms (say, the local term) containing $w_t$ appears in the Gr\"onwall estimate with correct sign. Thus, the main difficulty of the proof is to estimate the non-local term which contains a non-linear function of $w_t$; the key to resolve this difficulty is a manipulation of the volume constraining condition: as a result one can exchange non-local terms containing $w_t$ for non-local terms containing $w$. This manipulation relies on special choice of multiplication factor.

\begin{proof}[Proof of Theorem \ref{thm_unique}]
Suppose there exist two solutions of \eqref{deqn}, $u,v\in L^2(0,T;H^2(I))$ with $u_t,v_t\in L^2(0,T;L^2(I))$ satisfying the same initial condition $u_0\in W^{3,\infty}(I)$ with  $\|u_0\|_{L^\infty}<\delta_0$. By Lemma \ref{lem_usigsig_est} we have $u,v\in L^\infty(0,T; W^{3,\infty}(I))$ (on some possibly shorter time interval, which we denote with the same $T$). Then
\begin{align}
\frac{1-u\kappa_0}{S(u)}u_t-\frac{1-v\kappa_0}{S(v)}v_t &=\Phi_\beta\left(\frac{1-u\kappa_0}{S(u)}u_t\right)-\Phi_\beta\left(\frac{1-v\kappa_0}{S(v)}v_t\right)\\
&+ \frac{1-u\kappa_0}{S^3(u)}u_{\sigma\sigma}-\frac{1-v\kappa_0}{S^3(v)} u_{\sigma\sigma}\\ &+f(u,u_\sigma)-f(v,v_\sigma) -\lambda_u + \lambda_v,
\end{align}
for some function $f$. There exists some $z\in L^2(I\times [0,T])$ so that
\begin{align}
(1-\Phi_\beta'(z))\left(\frac{1-u\kappa_0}{S(u)}u_t-\frac{1-v\kappa_0}{S(v)}v_t\right)&=\frac{1-u\kappa_0}{S^3(u)}u_{\sigma\sigma}-\frac{1-v\kappa_0}{S^3(v)} v_{\sigma\sigma} \\
&+f(u,u_\sigma)-f(v,v_\sigma) -\lambda_u + \lambda_v.
\end{align}
Let $w:=u-v$. Upon rearrangement of terms:
\begin{align}\label{difference_eqn}
\frac{(1-\Phi_\beta'(z))(1-u\kappa_0)}{S(u)}w_t-\frac{1-u\kappa_0}{S^3(u)}w_{\sigma\sigma} = f_1 w + f_2 w_\sigma -\lambda_u +\lambda_v,
\end{align}
where $f_i=f_i(u,u_\sigma,u_{\sigma\sigma},u_t,v,v_\sigma,v_{\sigma\sigma},v_t,\Phi_\beta(V),\kappa_0)$ are uniformly bounded by virtue of Lemma \ref{lem_usigsig_est}. Multiply \eqref{difference_eqn} by $(1-u\kappa_0)w_t$ and integrate over $I$:
\begin{align}\label{proof_ineq}
\int_I \frac{(1-\Phi_\beta'(z))(1-u\kappa_0)^2}{S(u)}w_t^2 d\sigma &-\int_I \frac{(1-u\kappa_0)^2}{S^3(u)} w_{\sigma\sigma} w_t d\sigma = \int_I f_1 w w_t d\sigma\\
&+\int_I f_2 w_\sigma w_t d\sigma -(\lambda_u-\lambda_v)\int(1-u\kappa_0)w_t d\sigma,
\end{align}
where $f_1$ and $f_2$ have been updated accordingly. Note that since $\|\Phi_\beta'\|_{L^\infty(\mathbb{R})}<1$ then the first term on the left hand side of \eqref{proof_ineq} is strictly positive.\\
Integrating by parts
\begin{equation}
-\int_I \frac{(1-u\kappa_0)^2}{S^3(u)} w_{\sigma\sigma}w_t d\sigma  = \int_I \left(\frac{(1-u\kappa_0)^2}{S^3(u)}\right)_\sigma w_\sigma w_t d\sigma  + \int_I  \frac{(1-u\kappa_0)^2}{S^3(u)} w_\sigma w_{\sigma t}
\end{equation}
We note that
\begin{equation}
\int_I \frac{(1-u\kappa_0)^2}{S^3(u)} w_\sigma w_{\sigma t} = \frac{1}{2}\frac{d}{dt} \int_I \frac{(1-u\kappa_0)^2}{S^3(u)} w_\sigma^2 - \frac{1}{2} \int_I \left(\frac{(1-u\kappa_0)^2}{S^3(u)}\right)_t w_\sigma^2
\end{equation}
Note that the term $((1-u\kappa_0)^2S^{-3}(u))_t$ contains terms of the form $u_t, u_{\sigma t}$, which are uniformly bounded by Lemma \ref{lem_usigsig_est}. Using Young's inequality on the first two terms on the right hand side we then establish
\begin{align}\label{almost}
c\int_I w_t^2 d\sigma + \frac{1}{2}\frac{d}{dt}\int_I \frac{(1-u\kappa_0)^2}{S^3(u)}w_\sigma^2 d\sigma &\leq C\left(\int_I w^2 d\sigma+\int_I w_\sigma^2 d\sigma\right)\\
&-(\lambda_u-\lambda_v)\int(1-u\kappa_0)w_td\sigma,
\end{align}
for some $c,C>0$. \newline

We require an estimate on the last term on the right hand side of \eqref{almost}. To that end first multiply \eqref{deqn1} by $S(u)$ and integrate over $I$, using the definition of $\lambda_u$ and the fact that $\frac{1}{L[u]}\int_I \kappa(u) S(u)d\sigma = \frac{1}{|\Gamma|} \int_{\Gamma}\kappa ds=2\pi$ (see, e.g., \cite{Doc76}) to deduce that $\int_I V(u) S(u) d\sigma = 0$. That is, 
\begin{equation}
\int_I(1-u\kappa_0)u_t d\sigma=0,
\end{equation} 
and similarly for $v$. Thus, it follows that
\begin{equation}\label{wtforw}
\int_I (1-u\kappa_0) w_t d\sigma = \int_I \kappa_0 v_t w d\sigma
\end{equation}
and so we have
\begin{equation}
(\lambda_u-\lambda_v)\int(1-u\kappa_0)w_td\sigma \leq \xi |\lambda_u-\lambda_v|^2+C_\xi \int_I w^2d\sigma,
\end{equation}
for any $\xi>0$.
However, immediate calculations show that
\begin{equation}\label{lambdaest}
|\lambda_u-\lambda_v|^2 \leq C_1\int_I w^2 d\sigma + C_2 \int_I w_\sigma^2 d\sigma + C_3\int_I w_t^2 d\sigma,
\end{equation}
where constants $C_i$ can be chosen independent of $u$ and $v$.

Combining \eqref{almost} with \eqref{lambdaest}, we conclude that for $\xi>0$ sufficiently small:
\begin{equation}
\frac{d}{dt} \left\|\frac{(1-u\kappa_0)}{S^{3/2}(u)}w_\sigma\right\|^2_{L^2(I)} \leq C_\xi \left(\|w\|^2_{L^2(I)}+\|w_\sigma\|^2_{L^2(I)}\right)
\end{equation}
By Lemma \ref{lem_poincare}, $w$ satisfies the Poincar\'{e} inequality and so
\begin{equation}
\frac{d}{dt} \left\|\frac{(1-u\kappa_0)}{S^{3/2}(u)}w_\sigma\right\|^2_{L^2(I)}\leq C_\xi \|w_\sigma\|^2_{L^2(I)} \leq \tilde{C}_\xi \left\|\frac{(1-u\kappa_0)}{S^{3/2}(u)}w_\sigma\right\|^2_{L^2(I)}
\end{equation}
Since $w_\sigma(\sigma,0)\equiv 0$ and $\frac{(1-u\kappa_0)}{S^{3/2}(u)}>0$ we apply the Gr\"{o}nwall inequality to conclude that $w_\sigma\equiv 0$. \newline

Thus, $u$ and $v$ differ only by a function which is constant in space: $u(\sigma, t) = v(\sigma,t)+g(t)$. However due the volume preservation constraint and the fact that $g(0)=0$, it follows that $g\equiv 0$ and uniqueness is proved. \end{proof}

\section{2D traveling wave solutions}\label{sec_nonexist}

A traveling wave solution is a family of smooth curves which translate with constant shape and velocity. That is:
\begin{definition}
A {\bf traveling wave solution} of \eqref{interface} is a smooth ($C^2$) family of curves $\Gamma(\sigma,t)$ evolving by \eqref{interface} which satisfies
\begin{equation}\label{traveling_wave}
\Gamma(\sigma,t) = \Gamma(\sigma,0)+V_0 t,
\end{equation}
for some initial curve $\Gamma(\sigma,0)$ (the traveling wave profile) and $V_0\in \mathbb{R}^2$. 
\end{definition}

From the point of view of cell motility, traveling wave solutions correspond to persistently moving cells.  We note that stationary ($V=0$) circular solutions of \eqref{interface} always trivially exist. These solutions correspond to non-motile cells. \\

In \cite{MizBerRybZha15} it was proved that in the subcritical regime there are no traveling wave solutions of \eqref{interface} other than stationary circles. Since the parameter $\beta$ is related to the internal biophysics, this was interpreted as a regime where the physical mechanisms of actin polymerization and adhesion were too weak to overcome the membrane tension and give rise to persistent motion. The proof relied on the fact that $V-\Phi_\beta(V)$ is a monotone increasing function of $V$. \newline

In this section we will prove a result for non-existence of (non-trivial) traveling wave solutions as well as a sufficient condition for the existence of traveling wave solutions.

\subsection{Non-existence of traveling waves for symmetric $\Phi_\beta$} 
We now prove that if $\Phi_\beta$ is symmetric over the $y$-axis ($\Phi_\beta(V)=\Phi_\beta(-V)$), then there are no non-trivial traveling wave solutions of \eqref{interface}. In particular there are no traveling wave solutions in the case that $W$ is symmetric even when $\beta>\beta_{crit}$. This result was established in \cite{BerPotRyb16} for the 1D case; however the 2D case is not immediately clear since there is additional (geometric) freedom in 2D compared to 1D. 
This theorem extends the results of \cite{MizBerRybZha15} to the supercritical $\beta$ regime; symmetry of $\Phi_\beta$ replaces the role that monotonicity of $V-\Phi_\beta(V)$ had in the subcritical $\beta$ regime. 

\begin{theorem}\label{thm_nonexist}
Let $\Phi_\beta\in W^{1,\infty}(\mathbb{R})$ be symmetric: $\Phi_\beta(V)=\Phi_\beta(-V)$ for all $V\in\mathbb{R}$. If $\Gamma(\sigma,t)$ is a traveling wave solution of \eqref{interface} then $V_0=0$ and $\Gamma(\sigma,0)$ is a circle.
\end{theorem}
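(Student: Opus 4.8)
The plan is to pass to the co-moving frame and exploit a structural feature that a rigidly translating solution forces: the curvature becomes a function of the tangent angle alone. This simultaneously forces convexity (removing the ``extra geometric freedom'' of the 2D problem) and collapses the closure of the curve to a single scalar identity that the evenness of $\Phi_\beta$ renders contradictory.

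First I would record that a traveling wave is steady in the frame translating with velocity $V_0$: the profile $\Gamma(\sigma,0)$ moves rigidly, so its length, its curvature, and the integrand defining $\lambda(t)$ are all time-independent, whence $\lambda(t)\equiv\lambda$ is a constant. Writing $\nu$ for the inward unit normal and $V=V_0\cdot\nu$ for the (now time-independent) normal velocity, \eqref{interface} reduces to the profile equation $\kappa = V-\Phi_\beta(V)+\lambda$. After rotating coordinates so that $V_0=(0,c)$ with $c\geq 0$ and parametrizing by the tangent angle $\theta$ (so that $T=(\cos\theta,\sin\theta)$, $\nu=(-\sin\theta,\cos\theta)$, $\kappa=\theta'$), one has $V=V_0\cdot\nu=c\cos\theta$ and hence
\[
\kappa = h(\theta) := c\cos\theta - \Phi_\beta(c\cos\theta) + \lambda .
\]

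The step I expect to be the main obstacle is ruling out non-convex profiles, since $\theta$ is only a valid global parameter when $\kappa>0$. My proposed resolution is to observe that $\kappa$ is now a function of $\theta$ alone, so the defining relation $\theta'=\kappa$ becomes the \emph{autonomous} ODE $\theta'=h(\theta)$. As $\Phi_\beta\in W^{1,\infty}$ is Lipschitz, $h$ is Lipschitz and solutions are unique; if $h$ vanished at some $\theta_*$, the constant $\theta\equiv\theta_*$ would be the only solution through that value, so $\theta$ could never pass it. But for a simple closed curve traversed counterclockwise the tangent angle increases by exactly $2\pi$ (Hopf's Umlaufsatz, consistent with $\int_\Gamma\kappa\,ds=2\pi$ used above). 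This forces $h\neq 0$ everywhere, and the positive sign of the total turning then gives $h>0$. Thus $\kappa>0$: the profile is automatically convex, $\theta$ is a genuine global parameter, and $ds=d\theta/\kappa$.

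It then remains to impose that the curve closes, i.e. $\oint \cos\theta\,/\,h(\theta)\,d\theta=0$ and $\oint \sin\theta\,/\,h(\theta)\,d\theta=0$. Since $h$ is even in $\theta$ (as $\cos$ is even), the second condition holds automatically by parity; this is exactly the component along $V_0$ and needs no hypothesis on $\Phi_\beta$. For the first I would pair $\theta$ with $\pi-\theta$, where the evenness of $\Phi_\beta$ finally enters through $h(\pi-\theta)=-c\cos\theta-\Phi_\beta(c\cos\theta)+\lambda$; convexity of both $h(\theta)$ and $h(\pi-\theta)$ forces $B:=\lambda-\Phi_\beta(c\cos\theta)$ to satisfy $B^2>c^2\cos^2\theta$. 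Combining the two paired terms collapses the closure condition to
\[
0=\oint \frac{\cos\theta}{h(\theta)}\,d\theta = \int_{-\pi/2}^{\pi/2} \frac{-2c\cos^2\theta}{B^2-c^2\cos^2\theta}\,d\theta ,
\]
whose integrand is strictly negative whenever $c>0$. This contradiction forces $c=0$, hence $V_0=0$; then $V\equiv 0$, so $\kappa\equiv\lambda-\Phi_\beta(0)$ is constant and $\Gamma(\sigma,0)$ is a circle. The only genuinely delicate points left are fixing the orientation and sign conventions in the profile equation and checking that the involution $\theta\mapsto\pi-\theta$ tiles the circle exactly once.
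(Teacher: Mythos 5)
Your proposal is correct, and it takes a genuinely different route from the paper. The paper fixes the point of $\Gamma$ closest to the $x$-axis, writes the back and front of the profile as graphs solving $w_B'=f^v_\lambda(w_B)$ and $w_F'=f^{-v}_\lambda(w_F)$, and uses an ODE comparison argument (driven by the evenness of $\Phi_\beta$) to show the blow-up abscissae satisfy $x_F^*>x_B^*$, so the two halves cannot glue into a closed curve. You instead pass to the tangent angle, observe that the profile equation becomes the autonomous, Lipschitz ODE $\theta'=h(\theta)$, and deduce $h>0$ (hence convexity) from uniqueness of solutions plus the Umlaufsatz --- a clean way to dispose of the ``extra geometric freedom'' that the paper handles through the local-graph/vertical-tangent/gluing construction. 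You then reduce closure to $\oint T\,ds=0$, kill the component along $V_0$ by parity in $\theta$, and make the transverse component strictly negative via the involution $\theta\mapsto\pi-\theta$ together with $\Phi_\beta(-z)=\Phi_\beta(z)$. The two arguments are in fact related by the substitution $w=\tan\theta$ (the paper uses essentially this substitution in its existence proof, cf.\ the passage from \eqref{integral} to \eqref{newintegral}): the quantity $x_F^*-x_B^*$ that the paper bounds qualitatively is, up to that change of variables, the integral you evaluate explicitly by pairing. What each buys: your version yields a quantitative sign identity and convexity of the profile as a by-product; the paper's comparison argument feeds directly into the machinery ($I(V)$ and its sign changes) reused for the existence theorem. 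Two cosmetic points: where you write ``convexity of both $h(\theta)$ and $h(\pi-\theta)$'' you mean \emph{positivity}, and the paired integrand vanishes at $\theta=\pm\pi/2$, so it is only almost everywhere strictly negative --- neither affects the conclusion.
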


\begin{proof}
Assume there exists a non-trivial traveling wave solution with non-zero $V_0\in \mathbb{R}^2$. By rotation and translation we may assume that $V_0 = (0,v)$ with $v>0$ and that $\Gamma(\sigma,t)$ is contained in the upper half plane for all $t\geq 0$. Let $\sigma_0$ be such that $\Gamma(\sigma_0,0)$ is the closest to the $x$-axis. By translating we may assume that $\Gamma(\sigma_0,0)=(0,0)$. Locally we represent $\Gamma(\sigma,t)$ as a graph over the $x$-axis, $y=y(x)+ct$, where $y$ solves
\begin{equation}\label{y_eqn}
y'' = f_\lambda^v(y')
\end{equation}
with $y(0)=y'(0)=0$ and
\begin{equation}\label{f_eqn}
f_\lambda^v(z) := (1+z^2)^{3/2}\left(\frac{v}{\sqrt{1+z^2}}-\Phi_\beta\left(\frac{v}{\sqrt{1+z^2}}\right)+\lambda\right).
\end{equation}
Setting $w:=y'$ we see that solving \eqref{y_eqn} is equivalent to the first order equation
\begin{equation}\label{w_eqn}
w' = f_\lambda^v(w)
\end{equation}
with $w(0)=0$. If $w$ has a global solution then it cannot describe part of a smooth closed curve, so we may assume that the parameters $\lambda$ and $v$ are such that the solution $w_B(x)$ has finite blow-up $w_B\to \infty$ as $x\to x_B^*>0$. Note that \eqref{w_eqn} is uniquely solvable on its interval of existence by Lipschitz continuity and moreover exhibits symmetry over the $y$-axis by definition of $f_\lambda^v$. In particular the interval of existence of $w_B$ is $(-x_B^*,x_B^*)$. Defining $y_B(x):=\int_0^x w_B(s)ds$ we see that $y_B$ has a vertical tangent at $x_B^*$.\\

 The subscript $B$ suggests that $y_B$ represents the ``back'' portion of the curve. To form the front of the curve we consider $w_F$ (where $F$ stands for the ``front'' portion of the curve), where $w_F$ solves \eqref{w_eqn} with right hand side $f_\lambda^{-v}$ and initial condition $w_F(0)=0$. As above we assume that $w_F$ has a blow-up at $0<x_F^*<\infty$. Defining $y_F(x):=\int_0^x w_F(s)ds$ we have the transformation
 \begin{equation}
 \hat{y}_F(x) := -y_F(x-(x_B^*-x_F^*))+y_B(x^*_B)+y_B(x_F^*),
 \end{equation}
 which permits a smooth gluing of $\hat{y}_F$ to $y_B$ at the point $(x_B^*,y_B(x_B^*))$. It was proved in \cite{MizBerRybZha15} that this is the unique, smooth extension of $y_B$ at $x_B^*$; we omit the proof here. \newline
 
 We will prove that $x^*_F>x_B^*$, which guarantees that the graphs of $y_B(x)$ and $\hat{y}_F(x)$ cannot smoothly meet at $-x_B^*$. To that end, we first note that $w_B(0)=w_F(0)=0$ implies that
 \begin{equation}
 w_B'(0)-w_F'(0) = v-\Phi_\beta(v)-(-v-\Phi_\beta(-v)) = 2v>0,
 \end{equation}
 by the symmetry of $\Phi_\beta$. Thus, $w_B'(0)>w_F'(0)$. By continuity of $w'_B$ and $w'_F$ we deduce that $w_B(x)>w_F(x)$ for all $x>0$ sufficiently small. Suppose that there exists $\tilde{x}<\min\{x_F^*,x_B^*\}$ such that $w_B(\tilde{x})=w_F(\tilde{x})$. Necessarily at this point $w_F'(\tilde{x})\geq w_B'(\tilde{x})$. However, using \eqref{w_eqn} (and the symmetry of $\Phi_\beta$) we deduce that at $\tilde{x}$:
 \begin{equation}
 0\leq w_F'(\tilde{x})-w_B'(\tilde{x})
 = -2v(1+w_B(\tilde{x})^2),
 \end{equation}
 implying that $v\leq 0$, a contradiction. Thus $w_B(x)>w_F(x)$ for all $x < \min\{x^*_F,x_B^*\}$ and so $x^*_F\geq x^*_B$. \\
 
 Take $\hat{x}_F>0$ and $\hat{x}_B>0$ be such that $w_F(\hat{x}_F)=w_B(\hat{x}_B)$. Clearly $\hat{x}_F>\hat{x}_B$. Since
 \begin{equation}\label{growth_ineq}
 w_B'(\hat{x}_B)-w_F'(\hat{x}_F)=2v(1+w_B(\hat{x}_B)^2)>0,
 \end{equation}
 then $w_B'(\hat{x}_B)>w_F'(\hat{x}_F)$.
 Consider $w_{new}$ which solves
 \begin{equation}
 w_{new}' = f_\lambda^v(w_{new})
 \end{equation}
 with initial condition $w_{new}(\hat{x}_F) = w_F(\hat{x}_F)$ (see Figure \ref{nonexist_proof} for a sketch of $w_B$, $w_F$, and $w_{new}$). From \eqref{growth_ineq} and the initial condition of $w_{new}$ we deduce that $w'_{new}(\hat{x}_F)>w'_F(\hat{x}_F)$. Moreover, repeating the estimates as above we see that $w_{new}(x)> w_F(x)$ for all $x>\hat{x}_F$. Since 
 \begin{equation} 
 w_{new}(x) = w_B(x-(\hat{x}_F-\hat{x}_B))
 \end{equation}
 then we conclude that
 \begin{equation}
 x_B^* \leq x_F^*-(\hat{x}_F-\hat{x}_B) < x_F^*
 \end{equation}
 completing the proof.
\end{proof}

\begin{figure}[h!]
\centering
\includegraphics[width = .5\textwidth]{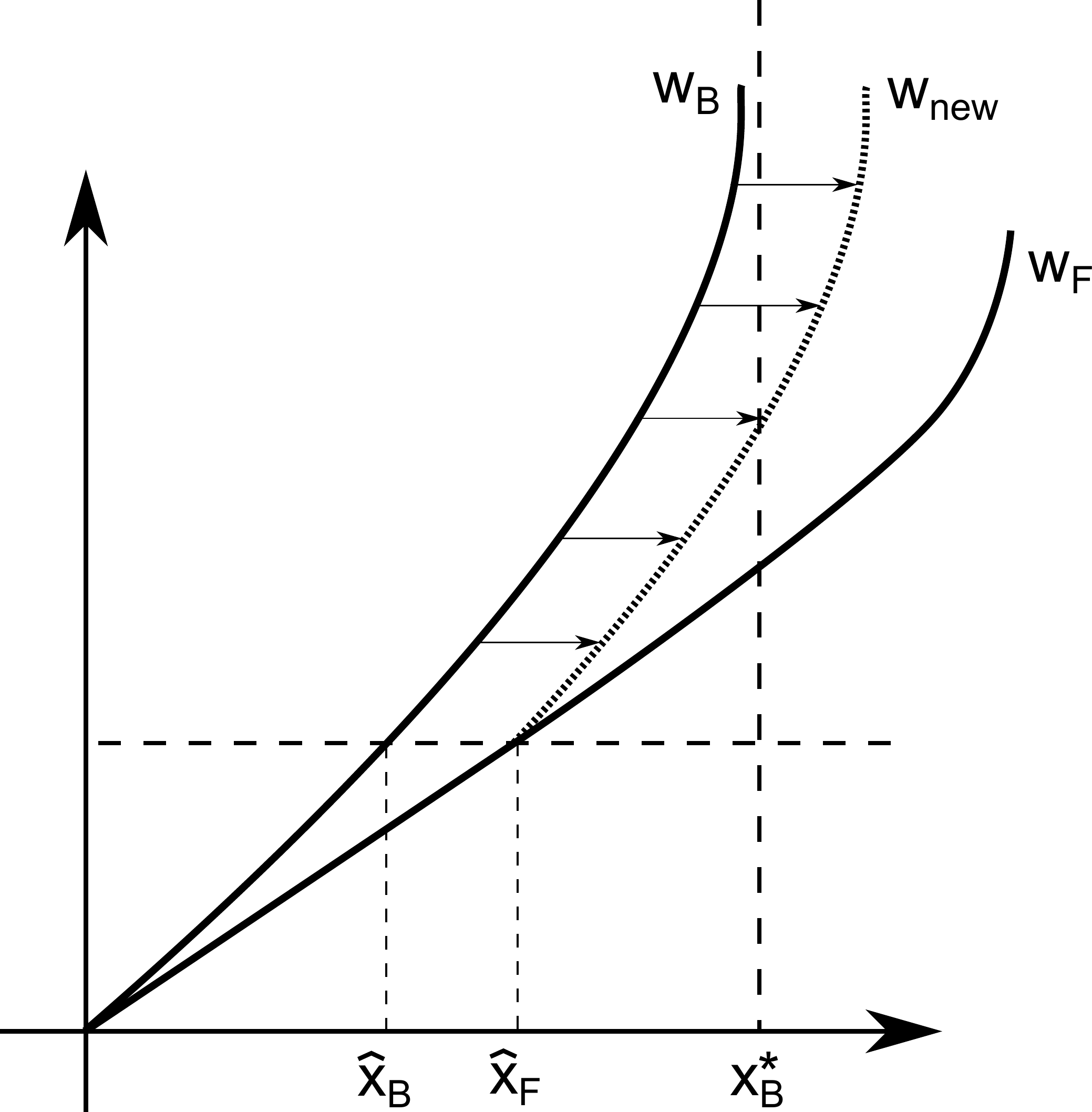}
\label{nonexist_proof}
\caption{Sketch of functions $w_B$, $w_F$ and $w_{new}$ from the proof of Theorem \ref{thm_nonexist}}
\end{figure}

\subsection{Existence of traveling waves for asymmetric $\Phi_\beta$}
As motile cells exhibit persistent motion, it is desirable to capture non-trivial traveling wave solutions.
We prove the existence of non-trivial traveling wave solutions to \eqref{interface}
in the case that $\Phi_\beta(V)$ possesses a sufficient level of asymmetry, namely that $\Phi_\beta'(0)>1$.

\begin{remark}
The condition $\Phi_\beta'(0)>1$ guarantees that 
\begin{equation}\label{tw_1D}
2V = \Phi_\beta(V)-\Phi_\beta(-V)
\end{equation}
 has a positive solution $V$. Solvability of \eqref{tw_1D} is both necessary and sufficient for the existence of traveling waves in the 1D case. This is not the case in 2D, where solvability of \eqref{tw_1D} is necessary but not sufficient.
\end{remark}

As in the previous subsection, we construct graphs of the form $y(x)+Vt$ where $y$ solves
\begin{equation}\label{yb_eqn}
y_B'' = f_V^\lambda (y_B'),\quad y_B(0)=y_B'(0)=0
\end{equation}
with $f_V^\lambda$ as defined in \eqref{f_eqn}. Then, $y$ corresponds to a (non-closed) curve which evolves with constant velocity $V$.
Equation \eqref{yb_eqn} has a maximal interval of existence $I_B$.  Likewise we consider
\begin{equation}\label{yf_eqn}
y_F''=f_{-V}^\lambda(y_F'), \quad y_F(0)=y'_F(0)=0.
\end{equation}
Equation \eqref{yf_eqn} has a maximal interval of existence $I_F$. The graphs $y_B$ and $y_F$ represent the rear and front parts of the traveling wave curve, respectively. 
Here, $V$ and $\lambda$ are parameters which must be chosen appropriately so as to ensure the intervals of existence, $I_B=(-x_B,x_B)$ and $I_F=(-x_F,x_F)$, are equal. In the case that $I_B=I_F$, it is clear that $\tilde{y}_F(x)= -y_F(x)+y_B(x_B)$ and the subsequent gluing of $\tilde{y}_F$ to $y_B$ gives rise to a smooth curve which is a traveling wave profile. \newline
Letting $w_B=y_B'$ we have
\begin{equation}
w_B' = f_V^\lambda(w_B),\quad w_B(0)=0,
\end{equation}
and likewise
\begin{equation}
w_F' = f_{-V}^\lambda (w_F),\quad w_F(0)=0.
\end{equation}
Define
\begin{equation}
\lambda(V) := 2\|\Phi_\beta\|_{L^\infty(\mathbb{R})}+V.
\end{equation}
Then $f^{\lambda(V)}_V(z)>0$ and likewise $f^{\lambda(V)}_{-V}(z)>0$ for all $V$ and all $z$. We then consider
\begin{equation}\label{wb_eqn}
w_B' = f_V^{\lambda(V)}(w_B),\quad w_B(0)=0,
\end{equation}
and likewise
\begin{equation}\label{wf_eqn}
w_F' = f_{-V}^{\lambda(V)} (w_F),\quad w_F(0)=0.
\end{equation}
We first prove the following lemma which states that $I_B$ and $I_F$ are finite intervals, ensuring the finite blow-up of $w_B$ and $w_F$.
\begin{lemma}\label{lem_blowup}
The maximal intervals of existence $I_B$ and $I_F$, corresponding to the ODEs \eqref{wb_eqn}-\eqref{wf_eqn}, are finite for any value of $V$.
\end{lemma}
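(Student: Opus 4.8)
The plan is to reduce the finiteness of the maximal intervals to the convergence of an improper integral, exploiting the strict positivity of the right-hand sides $f_V^{\lambda(V)}$ and $f_{-V}^{\lambda(V)}$ noted just above the statement. Since $w_B(0)=0$ and $w_B'=f_V^{\lambda(V)}(w_B)>0$, the solution $w_B$ is strictly increasing in $x$ and hence invertible on its interval of existence. Separating variables, the right endpoint $x_B$ of the maximal interval equals the (a priori improper) integral $\int_0^\infty dz/f_V^{\lambda(V)}(z)$, and blow-up $w_B\to+\infty$ occurs in finite $x$ precisely when this integral converges. The left endpoint is handled identically through $\int_{-\infty}^0 dz/f_V^{\lambda(V)}(z)$; because $f_V^{\lambda(V)}(z)$ depends on $z$ only through $z^2$ it is even in $z$, so the two endpoints agree in magnitude and the maximal interval is the symmetric $I_B=(-x_B,x_B)$ used in the sequel. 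The case of $w_F$ and $f_{-V}^{\lambda(V)}$ is entirely parallel.

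The key step is a lower bound of the form $f_V^{\lambda(V)}(z)\geq c\,(1+z^2)^{3/2}$ for some constant $c>0$ independent of $z$. First I would bound the scalar factor in parentheses: for $V>0$ the term $\tfrac{V}{\sqrt{1+z^2}}$ is nonnegative, while $\Phi_\beta$ contributes at most $\|\Phi_\beta\|_{L^\infty(\mathbb{R})}$ in absolute value, so by the choice $\lambda(V)=2\|\Phi_\beta\|_{L^\infty(\mathbb{R})}+V$ the parenthetical factor is bounded below by $\|\Phi_\beta\|_{L^\infty(\mathbb{R})}+V>0$ uniformly in $z$. The analogous bound for $f_{-V}^{\lambda(V)}$, using $\tfrac{-V}{\sqrt{1+z^2}}\geq -V$, gives a uniform lower bound $\|\Phi_\beta\|_{L^\infty(\mathbb{R})}>0$. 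Multiplying by the factor $(1+z^2)^{3/2}$ then yields the desired cubic growth for both right-hand sides.

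With this lower bound in hand, $\int_0^\infty dz/f_V^{\lambda(V)}(z)\leq c^{-1}\int_0^\infty (1+z^2)^{-3/2}\,dz<\infty$, since the integrand decays like $z^{-3}$ at infinity; hence $x_B<\infty$, and the same estimate gives $x_F<\infty$. I do not expect a genuine obstacle: the argument is a routine separation-of-variables and integral-comparison computation. The only point requiring care is verifying that the parenthetical factor admits a strictly positive lower bound \emph{uniform in} $z$ rather than merely finiteness on compact sets — this is exactly where the specific choice of $\lambda(V)$ and the fixed sign of $\tfrac{\pm V}{\sqrt{1+z^2}}$ enter, and it is what upgrades the growth of $f$ to $(1+z^2)^{3/2}$ and thereby makes the tail integral convergent.
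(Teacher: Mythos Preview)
Your proof is correct and follows essentially the same approach as the paper. Both arguments establish the uniform lower bound $f_{\pm V}^{\lambda(V)}(z)\geq c\,(1+z^2)^{3/2}$ with $c\geq\|\Phi_\beta\|_{L^\infty(\mathbb{R})}>0$ (the paper packages this via $c(V)$ and $d(V)$, the minima of the parenthetical factor over the range of $V/\sqrt{1+z^2}$); the only cosmetic difference is that the paper concludes by explicitly solving the comparison ODE $v'=A(1+v^2)^{3/2}$, while you phrase the same step as convergence of the integral $\int_0^\infty(1+z^2)^{-3/2}\,dz$, which is precisely the identity \eqref{blowup_b}--\eqref{blowup_f} the paper records immediately after its proof.
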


\begin{proof}
Define 
\begin{equation}
c(V) := \min_{0\leq\tilde{V}\leq V} \{\tilde{V}-\Phi_\beta(\tilde{V})+\lambda(V)\}
\end{equation}
and
\begin{equation}
d(V) := \min_{0\leq\tilde{V}\leq V} \{-\tilde{V}-\Phi_\beta(-\tilde{V})+\lambda(V)\}
\end{equation}
By definition of $\lambda(V)$, it follows that $c(V)\geq \|\Phi_\beta\|_{L^\infty(\mathbb{R})}>0$ and $d(V)\geq \|\Phi_\beta\|_{L^\infty(\mathbb{R})}>0$ for any $V$. Thus, we may bound the growth of $w_B$ and $w_F$ below:
\begin{equation}
w_B' \geq c(V) (1+w_B^2)^{3/2} 
\end{equation}
and likewise
\begin{equation}
w_F' \geq d(V) (1+w_F^2)^{3/2}.
\end{equation}
Let $A=\min\{c(V),d(V)\}$ and consider the ODE
\begin{equation}
v' = A (1+v^2)^{3/2}, \quad v(0)=0.
\end{equation}
Its solution
\begin{equation}
v(x) = \frac{Ax}{\sqrt{1-A^2x^2}}
\end{equation}
has blow-up $\lim_{x\to 1/A^-} v(x) = +\infty$. Due to the point-wise estimate $w_B\geq v$ and $w_F\geq v$ for all $x$, we have established the blow-up of $w_B$ and $w_F$.
\end{proof}

We have the following identities for the times of blow-up of $w_B$ and $w_F$:
\begin{equation}\label{blowup_b}
x_B = \int_0^{\infty} \frac{dz}{f_V^{\lambda(V)}(z)}
\end{equation}
and
\begin{equation}\label{blowup_f}
x_F = \int_0^{\infty} \frac{dz}{f_{-V}^{\lambda(V)}(z)}.
\end{equation}

By Lemma \ref{lem_blowup}, $x_B$ and $x_F$ are finite, so we may define
\begin{align}\label{integral}
I(V) &:= \int_0^\infty \left(\frac{1}{f^{\lambda(V)}_{-V}(z)}-\frac{1}{f^{\lambda(V)}_{V}(z)} \right)dz
\end{align}
Making the substitution $z\mapsto V/(1+z^2)^{1/2}$ in \eqref{integral}:
\begin{equation}\label{newintegral}
I(V) = \frac{1}{V} \int_{0}^{V} \frac{z(2z+\Phi_\beta(-z)-\Phi_\beta(z))}{\sqrt{V^2-z^2}(z-\Phi_\beta(z)+\lambda(V))(-z-\Phi_\beta(-z)+\lambda(V))}dz
\end{equation}
It is clear that zeros of the function $I$ correspond precisely to smooth traveling wave solutions.
\begin{theorem}\label{thm_twexist}
Let $\Phi_\beta$ satisfy $\Phi_\beta'(0)>1$. Then there exists a non-zero velocity traveling wave solution to \eqref{interface}.
\end{theorem}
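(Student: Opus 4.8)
The plan is to locate a positive zero of the function $I$ defined in \eqref{integral}, since, as already observed, such a zero corresponds to a non-trivial traveling wave profile. Because $I(V)=x_F-x_B$ is the difference of the two blow-up times \eqref{blowup_f} and \eqref{blowup_b}, which by Lemma \ref{lem_blowup} are finite, and both depend continuously on $V$, the function $I$ is continuous on $(0,\infty)$; I would verify this directly from the original integrand $\tfrac{1}{f_{-V}^{\lambda(V)}}-\tfrac{1}{f_{V}^{\lambda(V)}}$ via dominated convergence, using that for $V$ in a compact set the integrand is continuous in $(z,V)$ and decays like $z^{-4}$ as $z\to\infty$. It then suffices to exhibit a sign change of $I$ and invoke the intermediate value theorem.

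For the small-velocity regime I would use the hypothesis $\Phi_\beta'(0)>1$. The sign of the integrand in \eqref{newintegral} equals the sign of $h(z):=2z+\Phi_\beta(-z)-\Phi_\beta(z)$, the factors $\sqrt{V^2-z^2}$, $z-\Phi_\beta(z)+\lambda(V)$, and $-z-\Phi_\beta(-z)+\lambda(V)$ all being strictly positive on $(0,V)$ by the choice of $\lambda(V)$. Since $h(0)=0$ and $h'(0)=2(1-\Phi_\beta'(0))<0$, there is $u_0>0$ with $h<0$ on $(0,u_0)$; for every $V<u_0$ the whole interval of integration lies in $(0,u_0)$, forcing $I(V)<0$.

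For the large-velocity regime the goal is $I(V)=x_F-x_B>0$, and the crux is a scaling mismatch between the two blow-up times. From $u-\Phi_\beta(u)+\lambda(V)\ge V$ one gets $f_V^{\lambda(V)}(z)\ge V(1+z^2)^{3/2}$, hence the cheap bound $x_B\le V^{-1}\int_0^\infty (1+z^2)^{-3/2}\,dz=V^{-1}$. By contrast the front denominator $-u-\Phi_\beta(-u)+\lambda(V)=V\bigl(1-(1+z^2)^{-1/2}\bigr)+O(1)$ does not grow linearly in $V$ near $z=0$: since $1-(1+z^2)^{-1/2}\le z^2/2$, it stays of order one on a window $z=O(V^{-1/2})$. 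Bounding $x_F$ from below by $\int_0^1 dz\big/\bigl[2^{3/2}\bigl(Vz^2/2+3\|\Phi_\beta\|_{L^\infty(\mathbb{R})}\bigr)\bigr]$ and rescaling $z=\sqrt{2c/V}\,t$ yields $x_F\ge c_1 V^{-1/2}(1+o(1))$. Therefore $x_F\ge c_1V^{-1/2}>V^{-1}\ge x_B$ for all sufficiently large $V$, so $I(V)>0$. Together with the small-$V$ sign and continuity, the intermediate value theorem produces $V^*>0$ with $I(V^*)=0$.

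The main obstacle is precisely this large-$V$ estimate: one must recognize that the front profile's blow-up time scales like $V^{-1/2}$ while the back's scales like $V^{-1}$, which requires isolating the near-vertical-tangent region $z\approx 0$ where the front denominator fails to grow linearly in $V$. The small-$V$ sign and the continuity of $I$ are comparatively routine. I would also confirm that the choice $\lambda(V)=2\|\Phi_\beta\|_{L^\infty(\mathbb{R})}+V$ from Lemma \ref{lem_blowup} keeps both denominators uniformly positive throughout these estimates, which it does, so that the sign analysis and the integrability used for continuity are both justified.
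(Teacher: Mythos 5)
Your argument is correct. The overall skeleton is the same as the paper's---reduce to locating a sign change of $I(V)$ and invoke the intermediate value theorem, with an identical small-$V$ argument reading the sign of the integrand in \eqref{newintegral} off of $\Phi_\beta'(0)>1$---but your large-$V$ estimate takes a genuinely different route. The paper stays in the transformed integral \eqref{newintegral}, splits $VI(V)=\int_0^{V^*}+\int_{V^*}^{V}$ for a fixed $V^*>2\|\Phi_\beta\|_{L^\infty(\mathbb{R})}$, kills the first piece with an arctangent bound and shows the second has $\liminf_{V\to\infty}\geq 1/12$, which yields only $I(V)\gtrsim V^{-1}$. You instead bound the two blow-up times \eqref{blowup_b}--\eqref{blowup_f} directly in the original variable: $x_B\le V^{-1}$ since the back bracket is $\geq V$ uniformly, while $x_F\geq c_1V^{-1/2}(1+o(1))$ because the front bracket $V\bigl(1-(1+z^2)^{-1/2}\bigr)+O(1)$ remains of order one on a window $z=O(V^{-1/2})$ near the horizontal tangency; your rescaling computation is correct, and the constant $3\|\Phi_\beta\|_{L^\infty(\mathbb{R})}$ is strictly positive since $\Phi_\beta'(0)>1$ excludes $\Phi_\beta\equiv 0$. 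What your route buys is the sharp order of magnitude---$I(V)$ genuinely scales like $V^{-1/2}$, a fact the paper's crude denominator bound $(2V+2\|\Phi_\beta\|_{L^\infty(\mathbb{R})})^2$ obscures because it ignores the degeneration of the factor $-z-\Phi_\beta(-z)+\lambda(V)$ to $O(1)$ as $z\to V$---together with a transparent geometric picture (the front of the profile flattens far more slowly than the back). What the paper's route buys is that everything happens in a single integral over the bounded interval $(0,V)$, with no separate discussion of tails; your version instead needs the integrability and continuity checks you supply via dominated convergence with the majorant $2\|\Phi_\beta\|^{-1}_{L^\infty(\mathbb{R})}(1+z^2)^{-3/2}$, a point the paper leaves implicit.
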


\begin{proof}
	 Choose  $V^*>0$  such that  $ V^* > 2 \|\Phi_\beta\|_{L^\infty(\mathbb{R})} $. 
	The condition that $\Phi_\beta'(0)>1$ implies that $2V< \Phi_\beta(V)-\Phi_\beta(-V)$ for all $V$ which are sufficiently small. We conclude that $I(V)<0$ for $V$ sufficiently small. We will prove $ I(V) >0 $ for $V$ sufficiently large. Thus, by continuity of $I$ there exists a positive velocity $\bar{V}$ which is a zero of $I$, corresponding to a traveling wave velocity. Assume now that $V>V^*$. Then,
	\begin{equation}
		VI(V) = \int_0^{V^*} \cdot dz + \int_{V^*}^V \cdot dz = I_1(V)+I_2(V).
	\end{equation}
	Let $K:= \max_{0\leq z\leq V^*}\{ | z(2z+\Phi_\beta(-z)-\Phi_\beta(z)) |\}$. Then,
	\begin{equation}
	|	I_1(V) | \leq \frac{K}{\|\Phi_\beta\|^2_{L^\infty(\mathbb{R})}} \int_0^{V^*} \frac{1}{\sqrt{V^2-z^2}} = \frac{K}{\|\Phi_\beta\|^2_{L^\infty(\mathbb{R})}}\arctan\left(\frac{V^*}{\sqrt{V^2-(V^*)^2}}\right)
	\end{equation}
	Note that $I_1(V) \to 0$ as $V\to \infty$. \\
	We now estimate $I_2(V)$ from below:
	\begin{align*}
		 I_2(V) &\geq \int_{V^*}^{V} \frac{2z^2-z(\Phi_\beta(z)-\Phi_\beta(-z))}{V(2V+2\|\Phi_\beta\|_{L^\infty(\mathbb{R})})^2} \\
		& \geq \int_{V^*}^{V} \frac{z^2}{V(2V+2\|\Phi_\beta\|_{L^\infty(\mathbb{R})})^2} \\
		&= \frac{1}{3V(2V+2\|\Phi_\beta\|_{L^\infty(\mathbb{R})})^2} (V^3-(V^*)^3) \\		
	\end{align*}
	Upon taking the limit, $V\to\infty$ we see that
	\begin{equation}
		\lim_{V\to \infty} I_2(V) \geq \frac{1}{12}.
	\end{equation}
	We conclude that for $V$ sufficiently large, we have $I(V)>0$, completing the proof.
	 \end{proof}

\begin{remark}
For example, in \cite{BerPotRyb16}, an asymmetric double-well potential $W(z) = \frac{1}{4}z^2(1-z)^2(1+z^2)$ is considered in the phase-field system \eqref{eq1}-\eqref{eq2} and it is seen that in the sharp interface limit, where $\Phi_\beta'(0)>0$.  It follows that if $\beta$ is sufficiently large, then $\beta\Phi_\beta'(0)>1$. Thus, asymmetry in the double-well potential for the phase-field model is sufficient to give rise to persistent motion; physically, we recall that asymmetric potentials can be the result of myosin contraction.
\end{remark}

\section{Numerical results}

\subsection{Constructing traveling waves}

We numerically determine parameters $V$ and $\lambda$ corresponding to traveling wave solutions, i.e., such that solutions of  \eqref{yb_eqn}-\eqref{yf_eqn} have the same interval of existence. In the spirit of the proof of Theorem \ref{thm_twexist} we may define the integral
\begin{equation}\label{new_integral1}
I(V,\lambda) = \int_{0}^{\infty} \left( \frac{1}{f^\lambda_{-V}(z)}-\frac{1}{f^\lambda_V(z)}\right) dz
\end{equation}
whose zeros correspond to parameters yielding traveling wave solutions.  However, due to the unbounded domain and singularities of the integrand, solving \eqref{new_integral1} may accumulate significant numerical error. Thus, we introduce the following algorithm to search for traveling wave solutions:\\
\begin{algorithm}
\begin{enumerate}
\item[(I)] Solve $y_B'' = f_V^\lambda(y_B')$, with $y_B(0)=y_B'(0)=0$ until $y_B'(x_B)\approx 1$. \\
\item [(II)] Rotate the plane clockwise by $\frac{\pi}{2}$. In this frame, the traveling wave moves with velocity $v_x=V$, $v_y=0$, and can be locally represented as the graph
$x=x(y)+Vt$,
 $x(y)$ solving
\begin{equation}\label{rotatedeqn}
x'' = g_V^\lambda(x')
\end{equation}
with
\begin{equation}
g_V^\lambda(z):= \left(\frac{-Vz}{\sqrt{1+z^2}}-\Phi_\beta\left(\frac{-Vz}{\sqrt{1+z^2}}\right)+\lambda\right)(1+z^2)^{3/2}.
\end{equation} 
In this frame, solve $y_R'' = g_V^\lambda(y_R')$ with $y_R(0)=0, y'_R(0)= -y_B'(x_B)$ until $y_R'(x_R)\approx 1$.  \\
\item[(III)] Again rotate the plane clockwise by $\frac{\pi}{2}$. In this frame, the traveling wave moves with velocity $v_x=0$, $v_y=-V$. In this frame solve $y_F'' = f_{-V}^\lambda(y_F')$, with $y_F(0)=0$,  $y_F'(0)=-y_R'(x_R)$. \\
(IV). Define  $I_2(V,\lambda):=y_F'(x_B-y_R(x_R))$. Then $I_2(V,\lambda)=0$ if and only if the pair $(V,\lambda)$ corresponds to a traveling wave solution. 
\end{enumerate}
\end{algorithm}

For steps $(I)-(III)$ we use standard numerical packages to solve the ODEs. By assumption the algorithm avoids blow-up of derivatives and thus standard differential equations solvers are sufficiently accurate for subsequent numerical simulations.

As a toy example we define $\tilde{\Phi}_\beta(V) = -\beta (1-\tanh(V))e^{-V^2}$. We note that this choice of $\tilde{\Phi}_\beta$ is qualitatively similar to the function resulting from the phase-field model with asymmetric potential well. In particular, $\tilde{\Phi}_\beta$ has the property that $\tilde{\Phi}_\beta'(0)>1$ for sufficiently large $\beta$ and has exponential decay at $x\to \pm \infty$. The plot of $I_2(V,\lambda)$ with $\beta=100$ is in Figure \ref{figs:fake_pictures}.


\begin{figure}[h]
\includegraphics[width=\textwidth]{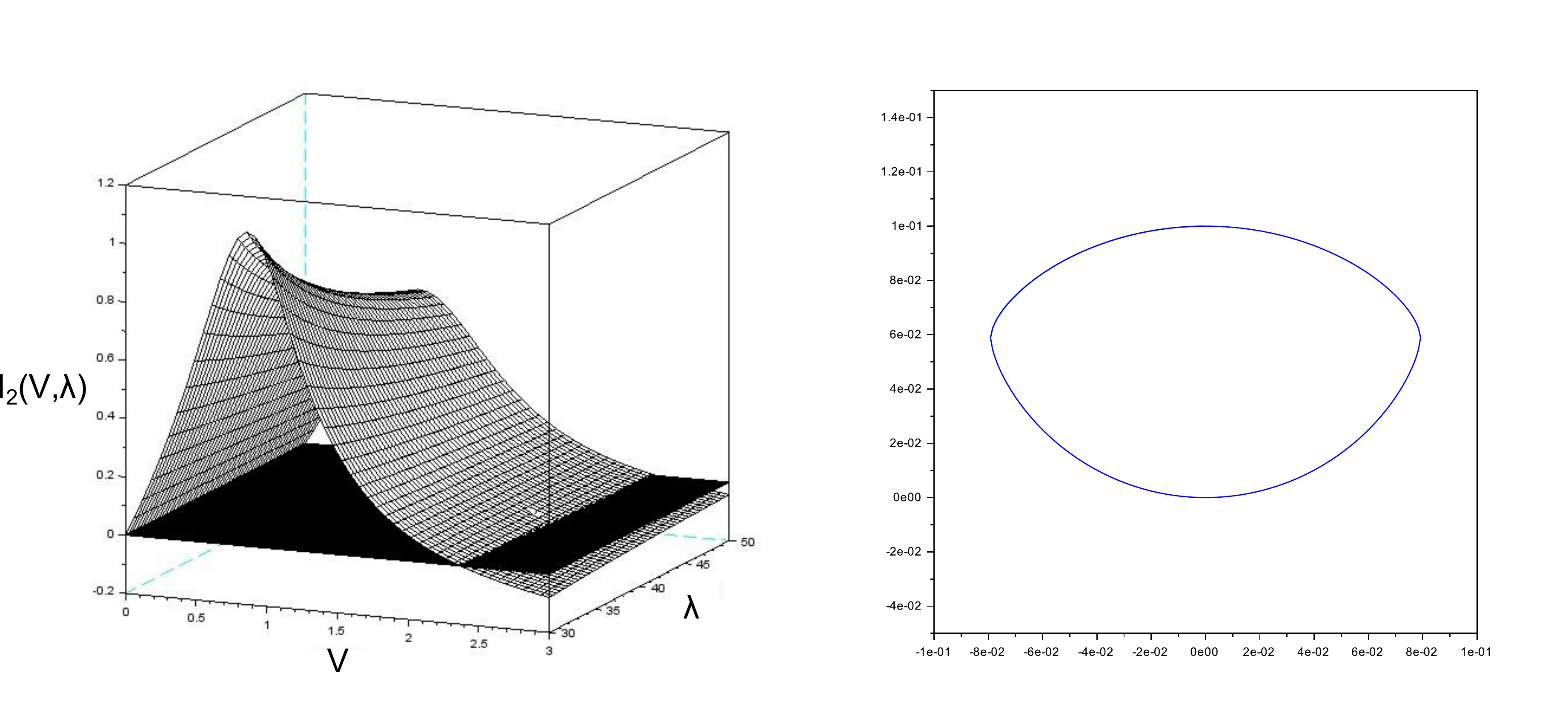}
\caption{(Left) Plot of $I_2(V,\lambda)$ with $\tilde{\Phi}_\beta(V)$ and $\beta=100$ (Right) Traveling wave profile for $\tilde{\Phi}_\beta(V)$, $\beta=100$, $V \approx 2.15$ (in positive $y$ direction), $\lambda \approx 9.75$.}
\label{figs:fake_pictures}
\end{figure}

We take the particular values $V \approx 2.15$, $\lambda \approx 9.75$ and plot the resultant traveling wave profile in Figure \ref{figs:fake_pictures}.
%

In order to relate the traveling wave analysis to the original phase-field model, we take the asymmetric potential well:
\begin{equation}\label{asym_pot_well}
W(z) = \frac{1}{4}z^2(1-z)^2(1+150z^2), 
\end{equation}
and solve for $\Phi_\beta(V)$ using \eqref{standingwave}-\eqref{phi1}. Again, taking $\beta = 100$ we plot $I_2(V,\lambda)$ in Figure \ref{figs:real_profile}.


As expected, we observe that $V=0$ corresponds to a family of non-motile circular solutions. We also observe a distinct family of motile solutions. Taking $V\approx 1.7$, $\lambda \approx 0$ corresponds to the traveling wave solution plotted in Figure \ref{figs:real_profile}.  


\begin{figure}[h]
\includegraphics[width = \textwidth]{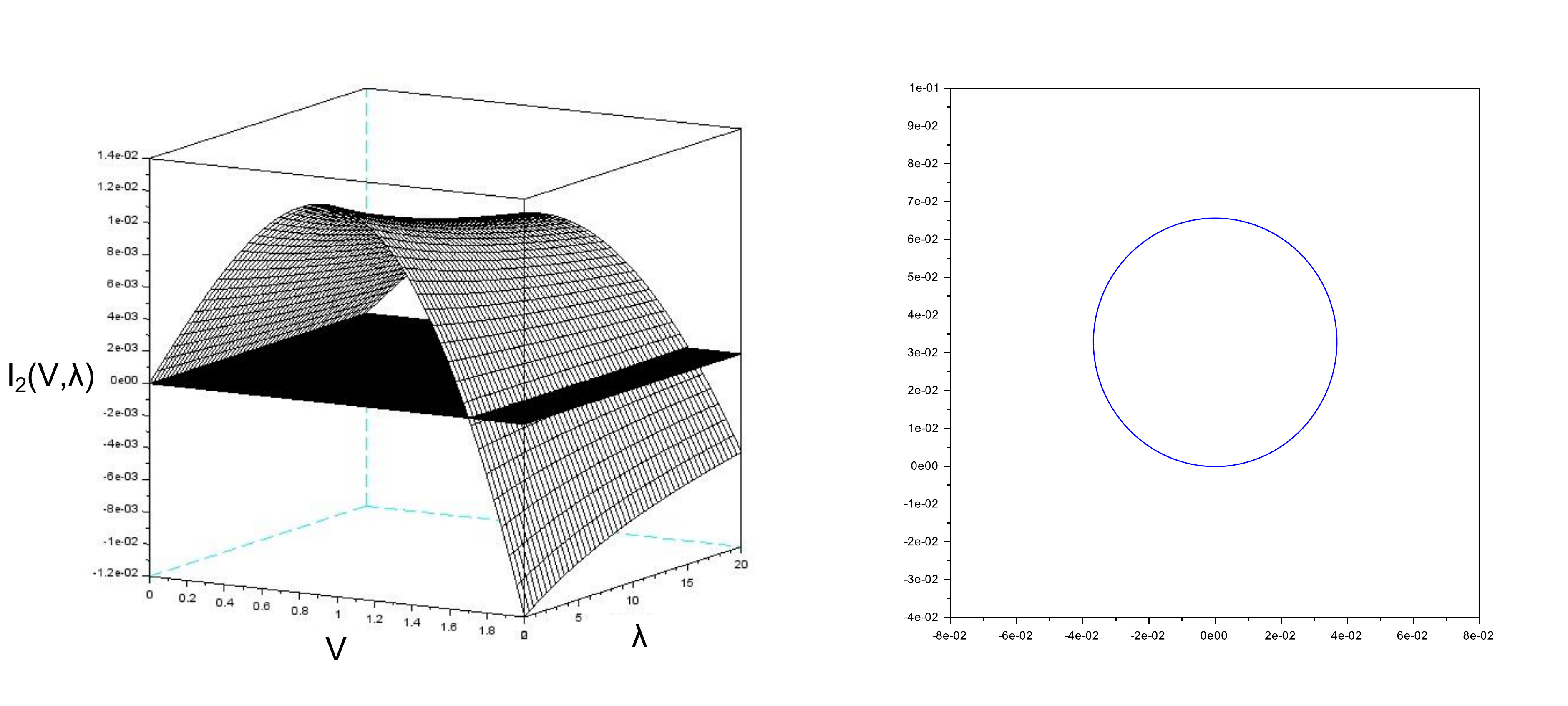}
\caption{(Left) Plot of $I_2(V,\lambda)$ with $\Phi_\beta(V)$ derived from \eqref{standingwave}-\eqref{phi1} with $W$ as in \eqref{asym_pot_well}; $\beta=100$. (Right) Traveling wave profile with $V\approx 1.7$, $\lambda \approx 0$.}
\label{figs:real_profile}
\end{figure}

\subsection{Algorithm for dynamic simulation of the sharp interface limit equation}

In the case $\beta=0$ (corresponding to volume preserving curvature motion), efficient techniques such as level-set methods \cite{Osh88,Sme03} and diffusion generated motion methods \cite{Mer93,Ruu03} can be used to accurately simulate the evolution of curves by \eqref{interface}. Furthermore, other modern numerical schemes (e.g., \cite{Barr11, Dec95, Dec05, Mik04}) are well suited to solve more general geometric flow problems. However, there is no straightforward way to implement these methods when $\beta>0$ since $V$ may not be uniquely defined by \eqref{interface}. 

We propose an algorithm to numerically investigate the dynamics of curves evolving via \eqref{interface} in order to study stability of traveling wave solutions as well as search for new modes of motion. Due to non-uniqueness of solutions of \eqref{interface} we require a mechanism to choose the most ``physically relevant'' solution. Recall the intermediate system \eqref{numeric_veqn1}-\eqref{numeric_aeqn1}:
\begin{align}
V_\epsilon(s,t) &= \kappa_\epsilon(s,t) + \int_{\mathbb{R}} A_\epsilon(s,z,t) (\theta_0'(z))^2 dz - \lambda_\epsilon(t) \label{numeric_veqn} \\
\epsilon \partial_t A_\epsilon(s,z,t) &= \partial_{zz}A_\epsilon + \partial_z A_\epsilon V_\epsilon -A_\epsilon - \beta\theta_0'(z),\;\; A_\epsilon (\pm \infty) = 0. \label{numeric_aeqn}
\end{align}
The intermediate system (between the full phase-field model and the sharp interface limit) is uniquely solvable and can be used as an evolution equation for planar curves.  Moreover, (formally) taking the limit $\ve \to 0$ yields \eqref{interface}.


Note that $\ve$ represents the time scale for convergence of $A_\ve$ to equilibrium. Recalling that $A_\ve$ is the normal component (to leading order) of the actin filament orientation and that $\ve$ represents the width of the diffuse interface where actin polymerization occurs, we interpret $\ve$ as the scale of inertial forces of actin filament protrusion.

In order to numerically simulate the system \eqref{numeric_veqn}-\eqref{numeric_aeqn} we first truncate the domain of $A_\ve(z,s,t)$ to $z\in [-L,L]$ and assume Dirichlet boundary conditions $A_\ve(\pm L, s,t)=0$. For subsequent numerical simulations we take $L=20$. Due to exponential decay of $A_\ve(z,\cdot,\cdot)$ this approximation introduces negligible error. We use centered finite differences to approximate spatial derivatives and an (explicit) forward in time discretization for \eqref{numeric_veqn}. However, due to the singular perturbation in the time derivative of $A_\ve$, it is beneficial to use backward in time discretizations in \eqref{numeric_aeqn} as it yields an unconditionally stable algorithm. We use the Thomas algorithm to invert the resultant tridiagonal system. 

We recall the following standard notations. Let $p_i = (x_i,y_i)$, $i=1,\dots, N$ be a discretization of a curve. Then $h:=1/N$ is the grid spacing and the first and second derivatives are defined
\begin{equation}
Dp_i := \frac{-p_{i+2}+8p_{i+1}-8p_{i-1}+p_{i-2}}{12h}\text{ and  } D^2p_i := \frac{-p_{i+2}+16p_{i+1}-30p_i+16p_{i-1}-p_{i-2}}{12h^2} \;\;\;
\end{equation}
with $p_{-j} := p_{N-j}$ due to periodicity.

Our algorithm is modified from the algorithm introduced in \cite{MizBerRybZha15}: \\
\begin{algorithm}
\begin{enumerate}
\item[(I)] (Pre-computation) Given a double-well potential, $W=W(z)$, solve for $\theta_0$ by solving
\begin{equation}\label{new_theta}
\theta_0'(z) = \sqrt{2W(\theta_0(z))},\;\; \theta_0(0)=\frac{1}{2}.
\end{equation}
This problem is equivalent to \eqref{standingwave} but avoids solving a boundary value problem at $\pm \infty$.

\item[(II)] (Initialization) Input a closed curve $\Gamma$ discretized by $N$ points $s^0_i=(x^0_i,y^0_i)$. For each point $s^0_i$ there is an associated function $A(s^0_i,z)$ on the interval $[-L,L]$ discretized by $a^0_{i,j}$, $j\in \{1,\dots,M\}$. Then $\Delta z_M = 2L/M$ is the space step on the interval $[-L,L]$. For all time, fix $\theta_0'$ which is discretized by $(\theta_0')_j$.  


Use the shoelace formula to calculate the area of $\Gamma(0)$:
\begin{equation}\label{eqn:shoelace}
A^o = \frac{1}{2} \left| \sum_{i=1}^{n-1} x^0_i y^0_{i+1} + x^0_n y^0_1 - \sum_{i=1}^{n-1} x^0_{i+1}y^0_i - x^0_1 y^0_n \right|.
\end{equation}

\item[(III)] (time evolution) Calculate the curvature at each point, $\kappa_i$ using the formula
\begin{equation}\label{curvature_eqn}
\kappa_i = \frac{\operatorname{det}(Dp^t_i,D^2 p^t_i)}{\|Dp^t_i\|^3},
\end{equation}
where $\|\cdot\|$ is the standard Euclidean norm.
Compute
\begin{equation}\label{phi_def}
\Phi_\beta^i = \sum_{j} a_{i,j}\cdot (\theta_0')_j^2 \Delta x_M
\end{equation}
and compute $\lambda = \frac{1}{|\Gamma|}\int_{\Gamma} \kappa + \Phi_\beta ds$ using a trapezoidal rule and the discretizations \eqref{curvature_eqn}-\eqref{phi_def}. Define
\begin{equation}\label{discreteV}
V^{temp}_i = \kappa_i+\Phi_\beta^i -\lambda.
\end{equation}
Define the temporary curve
\begin{displaymath}
p^{temp}_i := p^t_i+V^{temp}_i \nu_i\Delta t,
\end{displaymath}
where $\nu_i=(D s_i^0)^\perp/\|Ds_i^0\|$ is the inward pointing normal vector. \\

Use Thomas algorithm to update $a^0_{i,j}\mapsto a^{\Delta t}_{i,j}$ for each $i$.

\item[(IV)] (area adjustment) Calculate the area of the temporary curve $A^{temp}$ using the shoelace formula \eqref{eqn:shoelace} and compute the discrepancy
\begin{displaymath}
\Delta A:=(A^{temp}-A^{o})\cdot (A^o)^{-1},
\end{displaymath}
If $|\Delta A|$ is larger than a fixed tolerance $\delta>0$, adjust $\lambda \mapsto \lambda +\Delta A$ and solve \eqref{discreteV} with updated $\lambda$. Otherwise define $p_i^{\Delta t} := p_i^{temp}$ and
\begin{displaymath}
\Gamma(\Delta t):=\{p_i^{\Delta t}\}.
\end{displaymath}
\end{enumerate}
\end{algorithm}

We highlight that the area enclosed by the curve may have large deviation in long time simulation due to accumulation of errors. Part $(IV)$ is introduced precisely to punish such changes in the area. Simulations by this algorithm qualitatively agree with those conducted in \cite{MizBerRybZha15} in the case that $\beta$ is subcritical. We implement the above algorithm in C++ and visualize the data using Scilab. We choose the time step $\Delta t$ and spatial discretization step $\displaystyle h = \frac{1}{N}$, so that
\begin{equation}
\frac{\Delta t}{h^2} = O(10^{-3}),
\end{equation}
in order to ensure stability.
Further, we take an error tolerance $err = 10^{-8}$ in Part $(IV)$.


In the sequel, we consider the asymmetric potential \eqref{asym_pot_well}.  In particular, using $\beta=100$ we use the traveling wave profile depicted in Figure \ref{figs:real_profile} as initial condition in \eqref{numeric_veqn}-\eqref{numeric_aeqn}. Taking the traveling wave velocity $V\approx 1.7$, we initialize $A_\ve(s,z,0)$ to be the solution to 
\begin{equation}
0 = \partial_{zz} A_\ve + V_\ve \partial_z A - A -\beta \theta_0',
\end{equation}
where we emphasize that $V_\ve=V_\ve(s)$ is the normal velocity of the traveling wave solution at each point of the curve and not the total velocity.

\subsection{Simulation results}
Simulations of \eqref{numeric_veqn}-\eqref{numeric_aeqn} show that all traveling wave solutions (including steady circles) are unstable: a small perturbation (e.g., due to numerical error) of any traveling wave solution results in large deviations in the curve profile. In general we observe: \\

\begin{observation}
Traveling wave solutions of \eqref{numeric_veqn}-\eqref{numeric_aeqn} are unstable.
\end{observation}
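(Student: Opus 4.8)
The statement is reported as a numerical observation, so the plan is to justify it by a linear stability analysis of the intermediate system \eqref{numeric_veqn}-\eqref{numeric_aeqn} about a traveling wave, with the simulations serving as confirmation. First I would move to the frame co-moving with velocity $V_0$, in which a traveling wave $(\Gamma^*, A^*)$ is a stationary solution. Writing nearby curves as normal graphs $\Gamma^* + \phi(s,t)\nu$ and the field as $A_\ve = A^*(s,z) + \psi(s,z,t)$, I would linearize \eqref{numeric_veqn}-\eqref{numeric_aeqn} to obtain a coupled system: a diffusive equation for the geometric perturbation $\phi$ forced by $\psi$ through $\int \psi\,(\theta_0')^2\,dz$, together with the singularly perturbed field equation $\ve\partial_t\psi = \partial_{zz}\psi + V^*(s)\partial_z\psi - \psi$ plus lower-order terms coupling $\psi$ to $\phi$ and to the nonlocal velocity correction, where $V^*(s) = V_0\cdot\nu(s)$ is the normal velocity of the profile.

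The essential point is that to leading order the field equation decouples pointwise in $s$: at each arclength it is exactly the $1$D linearized operator of \cite{BerPotRyb16}, evaluated at the local velocity $V^*(s)$. By the criterion recalled after \eqref{stability_eqn1}-\eqref{stability_eqn2}, that operator has an unstable direction whenever $\Phi_\beta'(V^*(s)) > c_0$, where $c_0$ is normalized to $1$, matching the coefficient of $V_\ve$ in \eqref{numeric_veqn} and the definition of $\beta_{crit}$. Now for \emph{any} traveling wave the normal velocity $V^*(s) = V_0\cdot\nu(s)$ ranges continuously over an interval containing $0$: it vanishes wherever $\nu$ is orthogonal to $V_0$ (the lateral extrema of the profile), and it vanishes identically for the stationary circle. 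Since we are in the supercritical regime with $\Phi_\beta'(0) > 1 = c_0$, the velocity $V^* = 0$ is unstable, so every traveling wave has points where the local field dynamics are unstable. I would then localize a trial perturbation near such a point and show it produces an eigenvalue $\mu$ with $\mathrm{Re}\,\mu > 0$ for the full coupled problem.

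The main obstacle is to confirm that this local field instability survives the coupling to the geometry and is not quenched by the stabilizing mechanisms in \eqref{numeric_veqn}: curvature diffusion, which damps high-frequency geometric modes, and the rank-one nonlocal term from the Lagrange multiplier $\lambda_\ve$ enforcing the area constraint. Because the linearized operator is non-self-adjoint and genuinely two-scale (the $\ve\partial_t$ prefactor forces a slow-fast splitting between geometry and field), a clean spectral decomposition is unavailable; I would instead attempt either a quadratic-form estimate exhibiting a test function on which the form is positive, or a singular perturbation expansion in $\ve$ treating the geometric evolution as the slow variable. This delicacy---producing a genuine unstable eigenvalue for a coupled, nonlocal, non-self-adjoint, multiscale operator---is precisely what makes a rigorous theorem difficult and motivates recording the result as a numerical observation, corroborated by the simulations reported in this section.
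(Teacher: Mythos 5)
Your proposal matches the paper's own justification: the Observation is supported there only by direct numerical simulation, together with precisely your heuristic --- the 1D criterion that a velocity $V^*$ is unstable when $\Phi_\beta'(V^*)\geq c_0$, applied at the (at least two) points of any closed traveling-wave profile where the normal velocity $V_0\cdot\nu$ vanishes, combined with $\Phi_\beta'(0)>1$ in the regime where traveling waves exist. The rigorous step you flag as the main obstacle (extracting a genuine unstable eigenvalue of the coupled, nonlocal, non-self-adjoint, two-scale linearization) is exactly what the paper does not carry out and explicitly defers to future work in its conclusion, so your plan is consistent with, and if completed would go beyond, what the paper establishes.
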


\begin{observation}
The long time behavior of curves evolving by \eqref{numeric_veqn}-\eqref{numeric_aeqn} is (i) rotating solutions if $\ve < .005$ and (ii) wandering cells if $\ve>.005$.\\
\end{observation}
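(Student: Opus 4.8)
The plan is to establish this dichotomy as a \emph{numerically-supported observation}, since the intermediate system \eqref{numeric_veqn}-\eqref{numeric_aeqn} is a non-local, singularly perturbed geometric flow for which a closed-form long-time analysis is out of reach, and whose $\ve\to 0$ limit \eqref{interface} is moreover non-uniquely solvable. I would therefore combine a carefully controlled computational study with a reduced linear-stability argument that explains the crossover at $\ve\approx .005$. First I would fix unambiguous, quantitative criteria separating the two regimes: tracking the center of mass $X(t)=\frac{1}{|\Gamma|}\int_\Gamma p\,ds$, I would call a trajectory \emph{wandering} if its long-time average velocity $\lim_{T\to\infty}\frac1T\int_0^T \dot X\,dt$ is nonzero (net translation with superimposed bipedal oscillation), and \emph{rotating} if $X(t)$ stays bounded while the boundary sustains periodic lateral protrusions, measured e.g. by the amplitude of the low Fourier modes of $V_\ve(s,t)$ along the curve.

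Next I would run the dynamic-simulation algorithm of Section 4, initializing from the (unstable, by the preceding observation) traveling-wave profile produced for the asymmetric potential \eqref{asym_pot_well} with $\beta=100$, seeding a small controlled perturbation, and integrating \eqref{numeric_veqn}-\eqref{numeric_aeqn} to large $t$ over a sweep of $\ve$ bracketing $.005$. For each $\ve$ I would record the classification statistics above and locate the switch. To certify that the dichotomy and the threshold are genuine rather than discretization artifacts, I would check robustness under refinement of $N$, $\Delta t$, the truncation length $L$, and the area tolerance, and under changes in the perturbation mode and amplitude; in particular I would confirm that the enclosed area stays controlled, so the observed motion is not an artifact of the area-correction step.

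To explain \emph{why} $\ve\approx .005$ separates the regimes, I would exploit that $\ve$ sets the relaxation time of $A_\ve$ toward its equilibrium, which in the limit reproduces $\Phi_\beta$ in \eqref{interface}. Heuristically, for very small $\ve$ the actin field $A_\ve$ is slaved quasi-statically to the instantaneous velocity, so protrusions stay localized and the curve favors standing, periodically pulsing shapes (rotating); for larger $\ve$ the inertial lag in $A_\ve$ lets a protrusion persist and propel the curve, yielding net drift (wandering). I would make this precise by linearizing \eqref{numeric_veqn}-\eqref{numeric_aeqn} about a numerically computed traveling wave and studying the $\ve$-dependence of the leading eigenvalues: a transition, as $\ve$ decreases through $\approx .005$, from a real unstable mode carrying net translation to a complex-conjugate (Hopf-type) pair with vanishing mean drift would corroborate the observed switch.

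The hardest part is that the claim is intrinsically empirical: there is no well-posed limit object to analyze, the flow is non-local, and the curve undergoes large deformations, so the burden falls on the numerics and the reduced stability computation. The two genuine obstacles I anticipate are (i) producing mesh-independent criteria that cleanly separate rotating from wandering over long horizons without the curve self-intersecting, losing area control, or being dominated by the area-correction feedback, and (ii) making the linear-stability analysis about a traveling wave known only numerically accurate enough to pin the crossover near $\ve=.005$. I expect (i) to be the principal difficulty.
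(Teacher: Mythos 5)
Your overall strategy --- treating the claim as an empirical observation to be supported by a parameter sweep of the dynamic simulation of \eqref{numeric_veqn}--\eqref{numeric_aeqn}, initialized from the numerically computed traveling-wave profile for the asymmetric potential with $\beta=100$ --- is exactly what the paper does, and your diagnostics essentially match theirs: the paper tracks the trajectory of the center of the curve to exhibit the zig-zag (wandering) regime and uses the isoperimetric quotient $Q=4\pi\,Area(\Gamma)/|\Gamma|^2$ as the quantitative measure of how far the rotating solutions are from circles, observing that both the amplitude and the period of the lateral protrusion wave shrink as $\ve\to 0$. Where you genuinely diverge is in the proposed explanation of the crossover near $\ve\approx .005$: you plan a linear stability analysis of the full 2D system about the traveling wave, hoping to detect a transition from a real unstable translational mode to a Hopf-type pair. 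The paper does not attempt this. It instead imports the one-dimensional stability criterion $\Phi_\beta'(V^*)<c_0$ for the intermediate system \eqref{stability_eqn1}--\eqref{stability_eqn2}, notes that $V=0$ is an unstable velocity for the chosen $\Phi_\beta$ and that any closed traveling-wave profile necessarily contains points with normal velocity zero, and uses the resulting intervals of unstable velocities together with numerically observed hysteresis loops (tracked pointwise along the curve) as the heuristic mechanism; the balance between the speed of curve deformation and the speed of these unstable intervals is conjectured to select rotation versus wandering. Be aware that the authors explicitly defer 2D spectral analysis to future work and caution that eigenvalues of finite-dimensional approximations of the linearized operator need not converge to the true spectrum, so your part (ii) is not merely difficult but is flagged as open. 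Conversely, the systematic robustness checks under refinement of $N$, $\Delta t$, $L$, and the perturbation that you propose in order to certify the threshold are not reported in the paper; on that point your plan is more careful than what is actually carried out.
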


{\em Behavior (i): Rotations}. If $\ve < .005$, then traveling wave solutions immediately change shape and exhibit a periodic wave of protrusion which laterally traverses the curve. These curves appear to be near circular with a rotating protrusion; we call such solutions rotating solutions (see Figure \ref{figs:ratio} for a sketch).  We conjecture that this behavior is governed by stable/unstable velocities $V_\ve$ (as defined in Section \ref{pf_intro}): on small intervals on either side of the protrusion wave there are unstable velocities. These are sketched in Figure \ref{figs:ratio} as red intervals. The fact that simulations suggest that all traveling wave solutions are unstable is expected since linear stability analysis of the 1D intermediate problem \eqref{stability_eqn1}-\eqref{stability_eqn2} established that a velocity $V$ is stable if and only if $\Phi_\beta'(V)<c_0$ \cite{BerPotRyb16}. Using this criterion and our choice of $\Phi_\beta$, it is the case that $V=0$ is unstable; since there necessarily must be at least two points on a traveling wave curve with normal velocity $V=0$, intervals of unstable velocities are expected in the 2D traveling waves.  Indeed fixing a point on the curve and tracking its velocity over time we observe a hysteresis phenomenon similar to the 1D case \cite{BerPotRyb16}. We plot the results in Figure \ref{figs:hysteresis_zigzag}.

We observe that as $\ve\to 0$ then the resultant rotation solutions have smaller protrusions and smaller period.  To quantify this we plot the isoperimetric inequality
\begin{equation}
Q = \frac{4\pi Area(\Gamma)}{|\Gamma|^2}.
\end{equation}
of several rotating solutions over time in Figure \ref{figs:ratio}.



\begin{figure}[h]
\includegraphics[width = \textwidth]{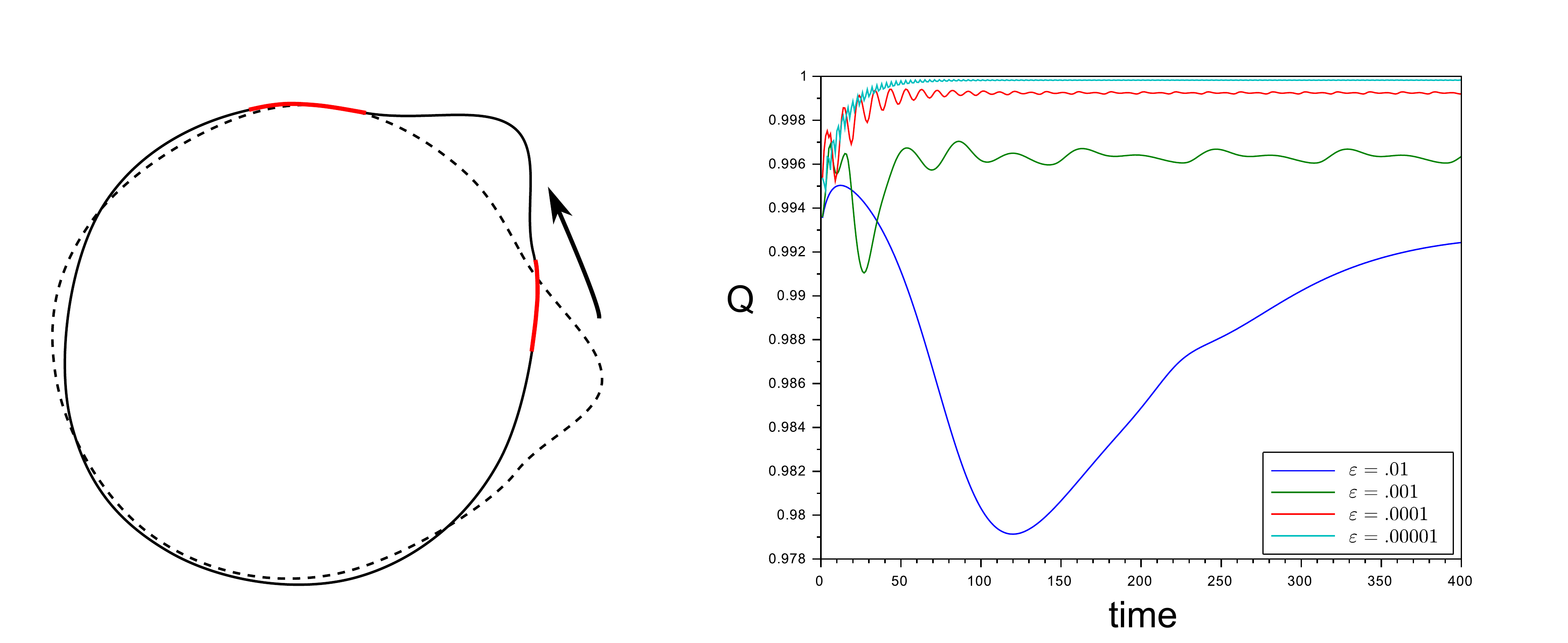}
\caption{(Left) Sketch of rotating cell; intervals in red represent unstable velocities (Right) Graph of the isoperimetric inequality $Q$ over time for various $\ve$}
\label{figs:ratio}
\end{figure}

The isoperimetric quotient is a measure of how far a curve is from a perfect circle: in general $Q\leq 1$ and $Q= 1$ if and only if $\Gamma$ is a circle. We indeed observe that as $\ve\to 0$ that the resulting periodic solutions converge to circles and that the period and amplitude both decrease as $\ve \to 0$. 
These rotating curves are in qualitative agreement with recent experimental data which observed rotating cells as a result of myosin activation \cite{Lou15}. In particular our simulations capture the experimental finding that protrusion size is correlated to protrusion lifetime, i.e., duration of time that a region remains protruded. \\ 

{\em Behavior (ii): Wandering cells}. If $\ve > .005$ then traveling wave solutions evolving by \eqref{numeric_veqn}-\eqref{numeric_aeqn} travel several times their own lengths before perturbations in the cell shape result in a turning or wandering cell. For example, in the case that $\ve = .01$ the cell undergoes a period of transitional turning and then walks in ``zig-zags'' indefinitely, see Figure \ref{figs:hysteresis_zigzag}. These simulations are similar to the bipedal motion described in \cite{LobZieAra14, BarAllJulThe10}, wherein cells undergo shape oscillations resulting in non-straight line trajectories.

As suggested by Figure \ref{figs:ratio} the period of the intervals of unstable velocities is much longer than in the case that $\ve < .005$ resulting in the speed of curve deformation to be of similar order to the speed of the intervals of unstable velocities. We conjecture that the balance of these two effects are responsible for the turning and wandering behavior of cells. Indeed in \cite{BarAllJulThe10} the authors suggest a hysteresis loop as a mechanism for bipedal motion very similar to the results of Figure \ref{figs:hysteresis_zigzag}.


\begin{figure}
\includegraphics[width = \textwidth]{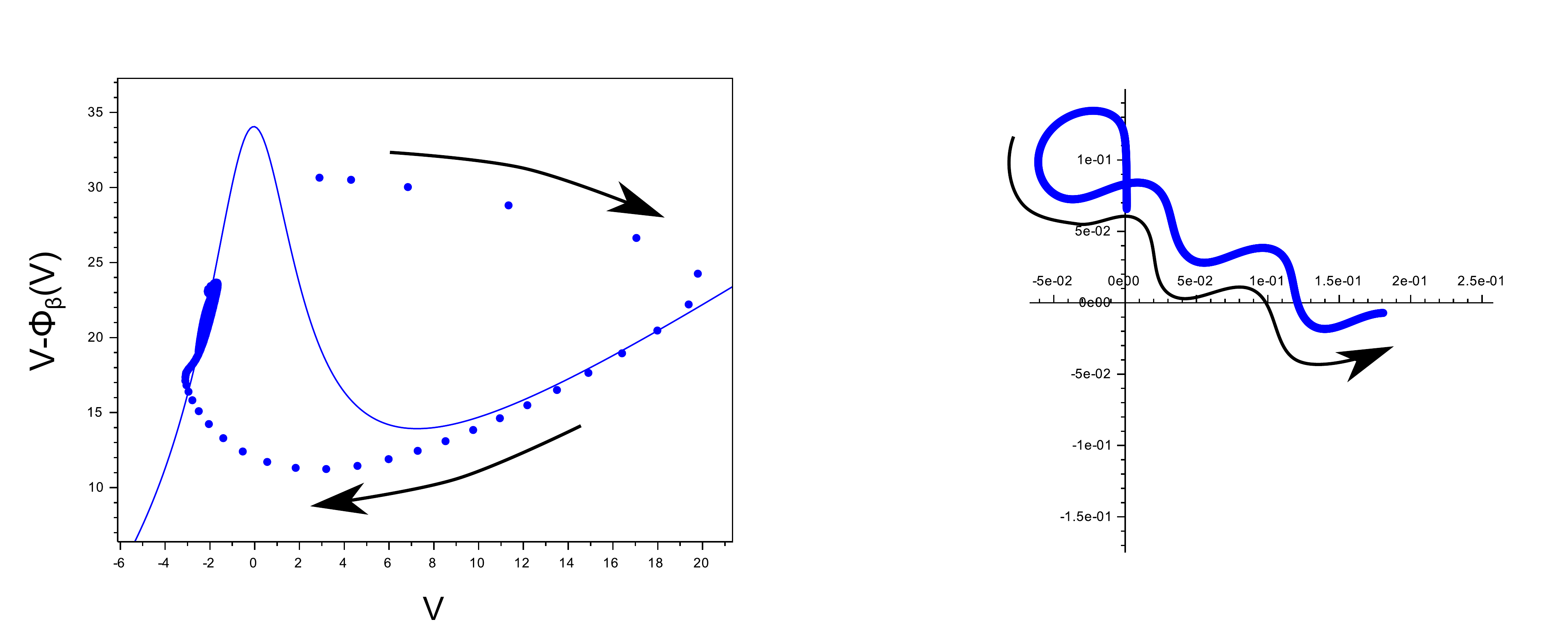}
\caption{(Left) Plot of $V-\Phi_\beta(V)$ tracked for a point on a curve evolving by system \eqref{numeric_veqn}-\eqref{numeric_aeqn} shows approximate hysteresis jumps. (Right) Trajectory of center of curve in \eqref{numeric_veqn}-\eqref{numeric_aeqn} when $\ve = .01$; after short transience period, convergence to ``zig-zag'' motion}
\label{figs:hysteresis_zigzag}
\end{figure}

{\em Biological interpretation:} Recall that $\ve$ represents the inertial scale of actin filament protrusion. Our results predict that $\ve$ must be sufficiently large to induce persistent bipedal motion. Otherwise the membrane protrusion lifetime generated by actin filament polymerization is too small to overcome visco-elastic membrane effects. In this latter case, the short lifetime results in lateral waves of protrusion (as observed in \cite{Lou15}).

We predict that the lack of (numerically) stable traveling wave solutions is due to the lack of spatially varying myosin contractility in the SIL.  The effects of myosin are incorporated in \eqref{eq1}-\eqref{eq2} via an asymmetric double well potential $W$ leading to an asymmetric function $\Phi_\beta$. By varying $\Phi_\beta$ along the curve we can incorporate spatially varying myosin effects, e.g., in experiments myosin contractility is more prevalent in the rear part of the cell than in the front (lamellipod).

\section{Conclusion}

We have presented both analytical and numerical results of the geometric evolution equation \eqref{interface} arising as the sharp interface limit of a phase-field model for crawling cell motility \eqref{eq1}-\eqref{eq2}. The key biophysical parameter $\beta$ captures the effect of actin polymerization strength and adhesion strength. In the case of subcritical $\beta$ the equation \eqref{interface} is uniquely solvable for normal velocity and thus is amenable to analytical study.  We have proved uniqueness of solutions of \eqref{interface} relying on a Gr\"onwall estimate of specially weighted $L^2$ norms. Moreover, we have proved that for both subcritical and supercritical $\beta$ regimes, if the nonlinearity $\Phi_\beta(V)$ is an even function then no traveling wave solutions to \eqref{interface} exist. However if $\Phi_\beta'(0)>0$ then we have proved that traveling waves exist for sufficiently large supercritical $\beta$. These results are crucial to understand the persistence of motile cells in experiments. Numerical simulation of \eqref{interface} in the case of supercritical $\beta$ requires a selection criterion from multiple solutions due to non-unique solvability of normal velocity. As such, we utilize an intermediate equation representative of both the phase-field model and the sharp interface limit equation. Our simulations revealed two phenomena which are both experimentally relevant: lateral protrusion waves (rotating cells) and bipedal cell motion (wandering cells) depending on the value of $\ve$, representing the inertial scale of actin protrusion forces.


An open question is to analytically study the stability of traveling waves, e.g., via linear stability analysis. Linear stability analysis in 2D is much more difficult due to coupling of geometry (i.e. curvature) with the dynamic PDE \eqref{numeric_aeqn}. We conducted preliminary numerical investigation of the linearized operator of \eqref{numeric_veqn}-\eqref{numeric_aeqn} around circular solutions. Using finite difference methods we found that some eigenvalues of the finite-dimensional approximation of the linearized operator have positive real part.  This is evidence of instability of circular steady states yet it is well known that convergence of the spectrum of finite-dimensional approximations to the full linearized operator is not well-behaved (see e.g., \cite{TreEmb05}). Rigorous spectral analysis will be a future work of the authors.

{\bf Acknowledgments.} The authors would like to thank Leonid Berlyand and Lei Zhang for their hospitalities, guidance, and fruitful discussions.

%
%
%
%

\bibliographystyle{siam}
\bibliography{cellref}

\end{document}